\DeclareMathOperator{\Hom}{Hom}
\DeclareMathOperator{\add}{add}
\theoremstyle{plain}
\newtheorem{theorem}{Theorem}[section]
\newtheorem*{theorem*}{Theorem}
\theoremstyle{definition}
\newtheorem{defn}[theorem]{Definition}
\newtheorem{exmp}[theorem]{Example} 
\newtheorem{remark}[theorem]{Remark}
\newtheorem{lemma}[theorem]{Lemma}
\newtheorem{notation}[theorem]{Notation}
\newtheorem{corollary}[theorem]{Corollary}
\newtheorem{setup}[theorem]{Setup}
\date{}
\begin{document}
\setlength{\parindent}{0pt}
\setlength{\parskip}{7pt}
\address{School of Mathematics, Statistics and Physics,
Newcastle University, Newcastle upon Tyne NE1 7RU, United Kingdom}
\email{francesca.fedele@gmail.com}
\keywords{Hom-spaces, limits and colimits, $(n-1)$-Calabi-Yau, $(n-1)$-cluster tilting, $n$-Calabi-Yau triple, quotient categories,  truncation triangles.}
\subjclass[2010]{16E45, 18A30, 18E30.}
\title[Triangulated and quotient categories arising from $n$-Calabi-Yau triples]{Properties of triangulated and quotient categories arising from $n$-Calabi-Yau triples}
\author{Francesca Fedele}
\maketitle
\begin{abstract}
The original definition of cluster algebras by Fomin and Zelevinsky has been categorified and generalised in several ways over the course of the past 20 years, giving rise to cluster theory. This study lead to Iyama and Yang's generalised cluster categories $\mathcal{T}/\mathcal{T}^{fd}$ coming from $n$-Calabi-Yau triples $(\mathcal{T}, \mathcal{T}^{fd}, \mathcal{M})$. In this paper, we use some classic tools of homological algebra to give a deeper understanding of such categories $\mathcal{T}/\mathcal{T}^{fd}$.
    
Let $k$ be a field, $n\geq 3$ an integer and $\mathcal{T}$ a $k$-linear triangulated category with a triangulated subcategory $\mathcal{T}^{fd}$ and a subcategory $\mathcal{M}=\text{add}(M)$ such that $(\mathcal{T}, \mathcal{T}^{fd}, \mathcal{M})$ is an $n$-Calabi-Yau triple.
    For every integer $m$ and every object $X$ in $\mathcal{T}$, there is a unique, up to isomorphism, truncation triangle of the form
    \begin{align*}
        X^{\leq m}\rightarrow X\rightarrow X^{\geq m+1}\rightarrow\Sigma X^{\leq m},
    \end{align*}
    with respect to the $t$-structure $((\Sigma^{<-m}\mathcal{M})^{\perp_\mathcal{T}},(\Sigma^{>-m}\mathcal{M})^{\perp_\mathcal{T}})$.
    In this paper, we prove some properties of the triangulated categories $\mathcal{T}$ and $\mathcal{T}/\mathcal{T}^{fd}$. Our first result gives a relation between the Hom-spaces in these categories, using limits and colimits. Our second result is a Gap Theorem in $\mathcal{T}$, showing when the truncation triangles split.
    
    Moreover, we apply our two theorems to present an alternative proof to a result by Guo, originally stated in a more specific setup of dg $k$-algebras $A$ and subcategories of the derived category of dg $A$-modules. This proves that $\mathcal{T}/\mathcal{T}^{fd}$ is Hom-finite and $(n-1)$-Calabi-Yau, its object $M$ is $(n-1)$-cluster tilting and the endomorphism algebras of $M$ over $\mathcal{T}$ and over $\mathcal{T}/\mathcal{T}^{fd}$ are isomorphic. Note that these properties make $\mathcal{T}/\mathcal{T}^{fd}$ a generalisation of the cluster category.
\end{abstract}
\section{Introduction}

Cluster theory has its origins with the definition of cluster algebras by Fomin and Zelevinsky in \cite[Definition 2.3]{FZ} . This was later categorified in several ways, starting with the definition of cluster categories by Buan, Marsh, Reineke,
Reiten, and Todorov in \cite[Section 1]{BRMRT}. A widely used generalisation of cluster categories is based on the Ginzburg dg algebra, see \cite[Section 4.2]{Gin}.
Starting from this, Amiot in \cite{AC} and Guo in \cite{GL} gave a better and more general way to construct cluster categories. In fact, unlike the original one, their definition  does not rely on orbit categories but on the more general concept of Verdier quotients.

In \cite{IYa}, Iyama and Yang generalised the above even further. In fact, instead of coming from a dg algebra, their generalised cluster categories come from an $n$-Calabi-Yau triple $(\mathcal{T},\mathcal{T}^{fd},\mathcal{M})$ in the sense of Definition \ref{defn_nCYtriple} with $\mathcal{M}=\text{add}\,(M)$.
The idea behind such a triple is that, starting from a triangulated category $\mathcal{T}$ with some extra assumptions and a triangulated subcategory $\mathcal{T}^{fd}$, we obtain a triangulated quotient category $\mathcal{T}/\mathcal{T}^{fd}$ playing the role of a generalised cluster category, see \cite[Section 2]{GL} and \cite[Section 5.3]{IYa}. Note that an $n$-Calabi-Yau triple is not just a vehicle to construct a generalised cluster category, but it is also a vehicle using $\mathcal{T}$ as a ``model'' to compute in $\mathcal{T}/\mathcal{T}^{fd}$.

Iyama and Yang proved in \cite[Section 5]{IYa} that such a generalised  cluster category has the requisite properties, that is it is Hom-finite, $(n-1)$-Calabi-Yau and it has an $(n-1)$-cluster tilting object. The aim of this paper is to reprove these properties using completely different and more classic means, giving in this way a deeper understanding of the generalised cluster category $\mathcal{T}/\mathcal{T}^{fd}$. In our approach, we use two classic tools in homological algebra: we use limits and colimits to compute Hom spaces in the quotient  $\mathcal{T}/\mathcal{T}^{fd}$ and we present a Gap theorem.


For every integer $m$, the pair
\begin{align*}
    (\mathcal{T}^{\leq m},\mathcal{T}^{\geq m}):=((\Sigma^{<-m}\mathcal{M})^{\perp_\mathcal{T}}, (\Sigma^{>-m}\mathcal{M})^{\perp_\mathcal{T}})
\end{align*}
is a $t$-structure. Hence, for every object $X$ in $\mathcal{T}$, there is a unique (up to isomorphism) truncation triangle 
\begin{align*}
    X^{\leq m}\xrightarrow{f^m} X\xrightarrow{g^{m+1}} X^{\geq m+1}\rightarrow\Sigma X^{\leq m},
\end{align*}
with $X^{\leq m}\in\mathcal{T}^{\leq m}$ and $X^{\geq m +1}\in\mathcal{T}^{\geq m+1}$.

An example of the above setup, which has been studied by Amiot in \cite{AC} and Guo in \cite{GL} among others, is given when $\mathcal{T}$ and $\mathcal{T}^{fd}$ are certain subcategories of the derived category of dg $A$-modules, where $A$ is a dg $k$-algebra and $M=A$, see Remark \ref{remark_AG_setup}. In particular, considering a quiver with potential and its Ginzburg dg algebra $\Gamma$, if $H^0(\Gamma)$ is finite-dimensional, then we get a $3$-Calabi-Yau triple, see Example \ref{example_3cy}.

Our first theorem compares Hom-spaces in $\mathcal{T}$ and Hom-spaces in $\mathcal{T}/\mathcal{T}^{fd}$ and gives a relation between the two using limits and colimits. Note that, despite the different setups, our result recalls some results by Artin and Zhang. The fact that the direct system in (a) stabilizes resembles \cite[Proposition 3.13]{AZ} and the formula in (a) resembles (2.2.1) from \cite[proof of Proposition 2.2]{AZ}.

{\bf Theorem A (=Theorem \ref{thm_directstab}).}
{\em Let $X$ and $Y$ be objects in $\mathcal{T}$.
\begin{enumerate}[label=(\alph*)]
\item For $p\gg0$, the direct system
\begin{align*}
\xymatrix@C=3em{
    \mathcal{T}(X,Y)\ar[r]&\mathcal{T}(X^{\leq 0},Y)\ar[r]&\mathcal{T}(X^{\leq -1},Y)\ar[r]&\mathcal{T}(X^{\leq -2},Y)\ar[r]&\cdots
    }
\end{align*}
stabilizes. Moreover, we have that
    $\underset{q}{\varinjlim}\,\mathcal{T}(X^{\leq -q}, Y)\cong \mathcal{T}/\mathcal{T}^{fd}(X, Y)$.
\item For $X$ and $Y$ in $\mathcal{T}$, we have that
    \begin{align*}
       \underset{q}{\varprojlim} \Big(\underset{p}{\varinjlim}\,\mathcal{T}(\Sigma^{-1}X^{\geq -p+1},Y^{\leq -q})\Big)\cong \mathcal{T}/\mathcal{T}^{fd}(X,Y).
    \end{align*}
\end{enumerate}
}

Our second theorem is inspired by the Gap Theorem by Frankild and J{\o}rgensen, see \cite[Theorem 2.5]{FJ}. The vanishing condition on the Hom-spaces can be viewed as a gap in the cohomology of the object $X$ and it implies the splitting of truncation triangles.

{\bf Theorem B(= Theorem \ref{thm_gapinX}).}
{\em Let $a$ be an integer and $X\in\mathcal{T}$ be such that $\mathcal{T}(M,\Sigma^j X)=0$ for $a\leq j\leq a+n-2$. Then, the truncation triangle
\begin{align*}
    \xymatrix@C=3em{
    X^{\leq a-1}\ar[r]& X\ar[r]&X^{\geq a}\ar[r]^-\delta& \Sigma X^{\leq a-1}
    }
\end{align*}
splits and $X\cong X^{\leq a-1}\oplus X^{\geq a}$.
}

We apply our two theorems to give an alternative proof to the following result. This was first proven, in the more specific setup of dg $k$-algebras and their derived categories outlined above, by Amiot in \cite[Theorem 2.1]{AC} for the case $n=3$ and by Guo in \cite[Theorem 2.2]{GL} for the case $n\geq 3$. It was first proved in the present form by different means in \cite[Section 5]{IYa}.

{\bf  Corollary C.}
{\em \begin{enumerate}[label=(\alph*)]
    \item The category $\mathcal{T}/\mathcal{T}^{fd}$ is Hom-finite and $(n-1)$-Calabi-Yau.
    \item The object $M$ is $(n-1)$-cluster tilting in $\mathcal{T}/\mathcal{T}^{fd}$.
    \item We have that $\mathcal{T}/\mathcal{T}^{fd}(M,M)\cong\mathcal{T}(M,M)$.
\end{enumerate}
}

In Corollary C, parts (a) and (c), corresponding to Corollaries \ref{coro_CY} and \ref{coro_part3} respectively, are consequences of Theorem A, while part (b), corresponding to Corollary \ref{coro_n-1ct}, is a consequence of Theorem B.

The paper is organised as follows. Section \ref{section_defns} introduces some definitions and our setup. Section \ref{section_thmA} presents some results on the Hom-spaces of $\mathcal{T}$ and $\mathcal{T}/\mathcal{T}^{fd}$ and proves Theorem A. Section \ref{section_thmB} proves Theorem B and Section \ref{section_coro} applies the previous sections to prove Corollary C.

\section{Definitions}\label{section_defns}

\begin{defn}
Let $\mathcal{A}$ be an additive category and $\mathcal{B}\subseteq \mathcal{A}$ be a full subcategory. We define the full subcategories
\begin{align*}
    \prescript{\perp_\mathcal{A}}{}{\mathcal{B}}:=\{ A\in\mathcal{A}\mid \Hom_{\mathcal{A}}(A,\mathcal{B})=0 \},\\
    \mathcal{B}^{\perp_{\mathcal{A}}}:=\{ A\in\mathcal{A}\mid \Hom_{\mathcal{A}}(\mathcal{B},A)=0 \}.
\end{align*}
\end{defn}
\begin{defn}
Let $\mathcal{T}$ be a triangulated category and $\mathcal{S}\subseteq \mathcal{T}$ be a full subcategory. We define thick$(\mathcal{S})$ to be the smallest triangulated subcategory of $\mathcal{T}$ closed under direct summands and containing $\mathcal{S}$.
\end{defn}

\begin{defn}[{\cite[Section 2.3]{IYa}}]
Let $\mathcal{T}$ be a triangulated category. A full subcategory $\mathcal{P}$ of $\mathcal{T}$ is called \textit{presilting} if $\Hom_\mathcal{T}(\mathcal{P}, \Sigma^i\mathcal{P})=0$ for any $i>0$. It is called \textit{silting} if in addition $\mathcal{T}=\text{thick}(\mathcal{P})$. An object $P\in\mathcal{T}$ is called \textit{presilting}, respectively \textit{silting}, if add$(P)$ is a presilting, respectively silting, subcategory of $\mathcal{T}$.
\end{defn}

\begin{defn}
A \textit{torsion pair} of a triangulated category $\mathcal{T}$ is a pair $(\mathcal{X}, \mathcal{Y})$ of full subcategories of $\mathcal{T}$ such that $\mathcal{X}= \prescript{\perp_\mathcal{T}}{}{\mathcal{Y}}$, $\mathcal{Y}=\mathcal{X}^{\perp_\mathcal{T}}$ and $\mathcal{T}=\mathcal{X}*\mathcal{Y}$, where
\begin{align*}
  \mathcal{X}*\mathcal{Y}:=\{M\in\mathcal{T}\mid \text{ there is a triangle } X\rightarrow M\rightarrow Y\rightarrow\Sigma X \text{ in } \mathcal{T} \text{ with } X\in\mathcal{X},\,\, Y\in\mathcal{Y}\},
\end{align*}
see \cite[Definition 2.2]{IY}.

A \textit{$t$-structure on $\mathcal{T}$} is a pair $(\mathcal{T}^{\leq 0}, \mathcal{T}^{\geq 0})$ of full subcategories of $\mathcal{T}$ such that $\mathcal{T}^{\geq 1}\subset \mathcal{T}^{\geq 0}$ and $(\mathcal{T}^{\leq 0}, \mathcal{T}^{\geq 1})$ is a torsion pair. Here, for an integer $m$, we denote $\mathcal{T}^{\leq m}=\Sigma^{-m}\mathcal{T}^{\leq 0}$ and $\mathcal{T}^{\geq m}=\Sigma^{-m}\mathcal{T}^{\geq 0}$, see \cite[Definition 1.3.1]{B}.

Similarly, a \textit{co-$t$-structure on $\mathcal{T}$} is a pair $(\mathcal{T}_{\geq 0}, \mathcal{T}_{\leq 0})$ of full subcategories of $\mathcal{T}$ such that $\mathcal{T}_{\geq 1}\subset \mathcal{T}_{\geq 0}$ and $(\mathcal{T}_{\geq 1}, \mathcal{T}_{\leq 0})$ is a torsion pair.  Here, for an integer $m$, we denote $\mathcal{T}_{\geq m}=\Sigma^{-m}\mathcal{T}_{\geq 0}$ and $\mathcal{T}_{\leq m}=\Sigma^{-m}\mathcal{T}_{\leq 0}$, see \cite[Definition 2.4]{P} and \cite[Definition 1.1.1]{Bo}.
\end{defn}

\begin{defn}\label{defn_trunc_triangle}
If $(\mathcal{T}^{\leq 0}, \mathcal{T}^{\geq 0})$ is a $t$-structure on $\mathcal{T}$, then by \cite[Proposition 1.3.3(ii)]{B} for each $X\in\mathcal{T}$, there is a triangle of the form
\begin{align*}
    X^{\leq 0}\xrightarrow{f^0} X\xrightarrow{g^1} X^{\geq 1}\rightarrow\Sigma X^{\leq 0},
\end{align*}
with $X^{\leq 0}\in\mathcal{T}^{\leq 0}$ and $X^{\geq 1}\in\mathcal{T}^{\geq 1}$ which is unique up to unique isomorphism and it is called the \textit{truncation triangle associated to $X$}. Moreover, for an integer $m$, the pair $(\mathcal{T}^{\leq m}, \mathcal{T}^{\geq m})$ is also a $t$-structure with truncation triangle associated with $X$ denoted by
\begin{align*}
    X^{\leq m}\xrightarrow{f^m} X\xrightarrow{g^{m+1}} X^{\geq m+1}\rightarrow\Sigma X^{\leq m}.
\end{align*}
We fix the notation used above for truncation triangles in the rest of the paper.
\end{defn}

\begin{defn}[{\cite[Definition 5.1]{IYa}}]\label{defn_nCYtriple}
Let $n\geq 3$ be an integer. Let $\mathcal{T}$ be a $k$-linear triangulated category, $\mathcal{M}$ be an additive subcategory of $\mathcal{T}$ and $\mathcal{T}^{fd}$ be a triangulated subcategory of $\mathcal{T}$. We say that $(\mathcal{T}, \mathcal{T}^{fd},\mathcal{M})$ is an \textit{$n$-Calabi-Yau triple} if the following are satisfied.
\begin{enumerate}
    \item[(CY1)] The category $\mathcal{T}$ is Hom-finite and Krull-Schmidt.
    \item[(CY2)] The pair $(\mathcal{T},\mathcal{T}^{fd})$ is \textit{relative $n$-Calabi-Yau} in the sense that there exists a bifunctorial isomorphism for any $X\in\mathcal{T}^{fd}$ and $Y\in \mathcal{T}$:
    \begin{align*}
        D\Hom_\mathcal{T}(X,Y)\cong \Hom_\mathcal{T}(Y,\Sigma^n X).
    \end{align*}
    \item[(CY3)] The subcategory $\mathcal{M}$ is a silting subcategory of $\mathcal{T}$ and admits a right adjacent $t$-structure $(\mathcal{T}^{\leq 0},\mathcal{T}^{\geq 0}):=((\Sigma^{<0}\mathcal{M})^{\perp_\mathcal{T}}, (\Sigma^{>0}\mathcal{M})^{\perp_\mathcal{T}})$ with $\mathcal{T}^{\geq 0}\subset\mathcal{T}^{fd}$.
\end{enumerate}
\end{defn}

\begin{remark}\label{remark_AG_setup}
Suppose we have a differential graded (dg) $k$-algebra $A$ with certain properties, see \cite[(1)-(3) and (4') in Section 1]{GL}.  Let $\mathcal{T}=\text{per}(A)$ be the perfect derived category of $A$ and $\mathcal{T}^{fd}=\mathcal{D}^b (A)$ be the full subcategory of the derived category of dg $A$-modules whose objects are the objects with finite-dimensional total homology. Then $(\text{per} (A), \mathcal{D}^b(A), \text{add}(A))$ is an $n$-Calabi-Yau triple.
\end{remark}

In order to illustrate that $n$-Calabi-Yau triples are not uncommon, we consider the following example, showing a class of examples of $3$-Calabi-Yau triples.
\begin{exmp}\label{example_3cy}
Let $(Q,W)$ be a quiver with potential and let $\Gamma=\Gamma(Q,W)$ be its Ginzburg dg algebra, see \cite[Section 4.2]{Gin} for details on this construction. Consider the algebra
\begin{align*}
    J(Q, W):=KQ/\langle  \delta_a W\mid a \text{ is an arrow in } Q \rangle,
\end{align*}
where $\delta_a: KQ/[KQ,KQ]\rightarrow KQ$ is the cyclic derivative with respect to the arrow $a$ and note that $H^0(\Gamma)=J(Q,W)$ by \cite[Definition 3.3]{AC}.  If $H^0(\Gamma)$ is finite-dimensional, then, using the same notation as in Remark \ref{remark_AG_setup}, we have that $(\text{per}(\Gamma), \mathcal{D}^{b}(\Gamma), \text{add}(\Gamma))$ is a $3$-Calabi-Yau triple, see \cite[Remark 5.16]{IYa}.
\end{exmp}

\section{The relation between morphisms in $\mathcal{T}$ and in $\mathcal{T}/\mathcal{T}^{fd}$}\label{section_thmA}

The goal of this section is to prove Theorem \ref{thm_directstab}. Working in Setup \ref{setup}, this shows the relation between Hom-spaces in the triangulated category $\mathcal{T}$ and Hom-spaces in the triangulated quotient category $\mathcal{T}/\mathcal{T}^{fd}$, see \cite[Sections 7-9]{MJ} for details on the construction of this quotient category.
In order to build this relation, we use inverse and direct systems, see \cite[Chapter 5.2]{Rot} for details on these systems.


\begin{setup}\label{setup}
Let $k$ be a field, $n\geq 3$ an integer and $(\mathcal{T}, \mathcal{T}^{fd},\mathcal{M})$ be an $n$-Calabi-Yau triple with $\mathcal{M}=\add (M)$ for some object $M\in\mathcal{T}$. Note that, since $\mathcal{M}$ is silting, we have  $\mathcal{T}=\text{thick}\,(M)$.
\end{setup}

\begin{remark}\label{remark_cot_structure}
By \cite[Section 5.1]{IYa}, we have that
\begin{align*}
    (\mathcal{T}_{\geq 0},\mathcal{T}_{\leq 0}):=\Bigg(\bigcup_{i\geq 0} \Sigma^{-i}\mathcal{M}*\Sigma^{-i+1}\mathcal{M}*\cdots*\Sigma^{-1}\mathcal{M}*\mathcal{M}, \bigcup_{i\geq 0} \mathcal{M}*\Sigma\mathcal{M}*\cdots*\Sigma^{i-1}\mathcal{M}*\Sigma^i\mathcal{M} \Bigg)
\end{align*}
is a \textit{bounded co-$t$-structure}, that is
\begin{align*}
   \bigcup_{i\in \mathbb{Z}}\Sigma^i \mathcal{T}_{\geq 0}= \bigcup_{i\in \mathbb{Z}}\Sigma^i \mathcal{T}_{\leq 0}=\mathcal{T}, 
\end{align*}
see \cite[Definition 2.1]{JPcot}. Hence, given an object $X$ in $\mathcal{T}$, there are integers $l$ and $m$ such that $X\in\Sigma^l\mathcal{T}_{\geq 0}$ and $X\in\Sigma^m\mathcal{T}_{\leq 0}$. Moreover, by \cite[p. 7886, line 2]{IYa}, we have that $\mathcal{T}^{\leq 0}=\mathcal{T}_{\leq 0}$ and similarly $\mathcal{T}^{\leq i}=\mathcal{T}_{\leq i}$ for any integer $i$.
\end{remark}

\begin{lemma}\label{lemma_inversesys}
For any $X\in\mathcal{T}$, there is a canonical inverse system of the form
\begin{align*}
\xymatrix@C=3em{
    \cdots\ar[r]^{\xi^{-4}}&X^{\leq -3}\ar[r]^{\xi^{-3}}& X^{\leq -2}\ar[r]^{\xi^{-2}} &X^{\leq -1}\ar[r]^{\xi^{-1}} &X^{\leq 0}\ar[r]^{f^{0}}& X.
    }
\end{align*}
\end{lemma}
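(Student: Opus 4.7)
The plan is to build each connecting morphism $\xi^m$ via the universal property of the truncation triangle at level $m+1$, using that the $t$-aisles are nested.

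First I would record the nesting $\mathcal{T}^{\leq m}\subseteq\mathcal{T}^{\leq m+1}$. By definition $\mathcal{T}^{\leq m}=(\Sigma^{<-m}\mathcal{M})^{\perp_\mathcal{T}}$, and since $\Sigma^{<-m-1}\mathcal{M}\subseteq\Sigma^{<-m}\mathcal{M}$, taking right perpendiculars reverses the inclusion, giving $\mathcal{T}^{\leq m}\subseteq\mathcal{T}^{\leq m+1}$. In particular, for any $X\in\mathcal{T}$, we have $X^{\leq m}\in\mathcal{T}^{\leq m}\subseteq\mathcal{T}^{\leq m+1}$.

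Next, for each $m\leq -1$, I would construct the arrow $\xi^{m}\colon X^{\leq m}\to X^{\leq m+1}$ using the truncation triangle at $m+1$:
\begin{align*}
X^{\leq m+1}\xrightarrow{f^{m+1}} X\xrightarrow{g^{m+2}} X^{\geq m+2}\to\Sigma X^{\leq m+1}.
\end{align*}
Since $X^{\leq m}\in\mathcal{T}^{\leq m+1}$ and $X^{\geq m+2}\in\mathcal{T}^{\geq m+2}$, the $t$-structure orthogonality gives $\mathcal{T}(X^{\leq m},X^{\geq m+2})=0$, so $g^{m+2}\circ f^{m}=0$. Applying $\mathcal{T}(X^{\leq m},-)$ to the truncation triangle above produces a (unique) lift $\xi^{m}\colon X^{\leq m}\to X^{\leq m+1}$ such that $f^{m+1}\circ\xi^{m}=f^{m}$. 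This is what ``canonical'' means: $\xi^{m}$ is the unique morphism making the triangle
\begin{align*}
\xymatrix@C=3em{
X^{\leq m}\ar[rr]^{\xi^m}\ar[dr]_{f^{m}} & & X^{\leq m+1}\ar[dl]^{f^{m+1}}\\
& X &
}
\end{align*}
commute, where uniqueness comes from the fact that $\mathcal{T}(X^{\leq m},\Sigma^{-1}X^{\geq m+2})$ also vanishes by the same shifted orthogonality, so the lift is unique.

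Finally, assembling the $\xi^m$ for $m\leq -1$ produces the required diagram terminating at $f^{0}\colon X^{\leq 0}\to X$. The compatibility in the chain (that consecutive triangles actually commute together and define an inverse system) is automatic from the identities $f^{m+1}\circ\xi^{m}=f^{m}$, combined once more with the uniqueness of lifts to the higher truncation $X^{\leq m+2}$. There is no substantive obstacle here; the lemma is essentially a book-keeping consequence of two facts about the $t$-structures $(\mathcal{T}^{\leq m},\mathcal{T}^{\geq m+1})$, namely that they are nested and that their truncation triangles satisfy the usual universal property, so the only thing to verify carefully is that the nesting goes in the correct direction, as checked above.
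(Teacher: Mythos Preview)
Your proposal is correct and follows essentially the same approach as the paper: both use the nesting $\mathcal{T}^{\leq m}\subseteq\mathcal{T}^{\leq m+1}$ together with the torsion-pair orthogonality $\mathcal{T}(\mathcal{T}^{\leq m+1},\mathcal{T}^{\geq m+2})=0$ to lift $f^{m}$ through $f^{m+1}$ and obtain $\xi^{m}$. Your version is slightly more explicit than the paper's in that you also argue uniqueness of the lift (via $\mathcal{T}(X^{\leq m},\Sigma^{-1}X^{\geq m+2})=0$), which justifies the word ``canonical'' more fully, but otherwise the two arguments coincide.
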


\begin{proof}
Note that we have a chain of inclusions of full subcategories
\begin{align*}
    \cdots\subset \mathcal{T}^{\leq -3}\subset \mathcal{T}^{\leq -2}\subset \mathcal{T}^{\leq -1}\subset \mathcal{T}^{\leq 0}.
\end{align*}
Given $X\in \mathcal{T}$ and $p\geq 1$, consider the truncation triangles associated to $X$ with respect to the $t$-structures $(\mathcal{T}^{\leq -p}, \mathcal{T}^{\geq -p})$ and $(\mathcal{T}^{\leq -p+1}, \mathcal{T}^{\geq -p+1})$. Then there is a morphism of triangles of the form
\begin{align}\label{diagram_xi}
    \xymatrix@C=3em{
    X^{\leq -p}\ar[r]^{f^{-p}}\ar[d]^{\xi^{-p}}& X\ar[r]\ar@{=}[d]& X^{\geq -p+1}\ar[r]\ar@{-->}[d]&\Sigma X^{\leq -p}\ar[d]\\
    X^{\leq -p+1}\ar[r]_{f^{-p+1}}& X\ar[r]& X^{\geq -p+2}\ar[r]&\Sigma X^{\leq -p+1},
    }
\end{align}
where $\xi^{-p}$ such that $f^{-p+1}\circ \xi^{-p}=f^{-p}$ exists since $X^{\leq -p}\in\mathcal{T}^{\leq -p}\subset \mathcal{T}^{\leq -p+1}$, $X^{\geq -p+2}\in\mathcal{T}^{\geq -p+2}$ and $(\mathcal{T}^{\leq -p+1},\mathcal{T}^{\geq -p+2})$ is a torsion pair and the dashed morphism then exists by the axioms of triangulated categories. Then
\begin{align*}
\xymatrix@C=3em{
    \cdots\ar[r]^{\xi^{-4}}&X^{\leq -3}\ar[r]^{\xi^{-3}}& X^{\leq -2}\ar[r]^{\xi^{-2}} &X^{\leq -1}\ar[r]^{\xi^{-1}} &X^{\leq 0}\ar[r]^{f^{0}}& X
    }
\end{align*}
is an inverse system.
\end{proof}

\begin{lemma}\label{lemma_iso_quotient_cat}
Let $X\in\mathcal{T}$. For any integer $m$, in the quotient category $\mathcal{T}/\mathcal{T}^{fd}$ we have that $f^m$ and $\xi^m$ become  isomorphisms and $X\cong_{\mathcal{T}/\mathcal{T}^{fd}} X^{\leq m}$.
\end{lemma}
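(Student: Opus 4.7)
The plan is to use the standard characterization of isomorphisms in a Verdier quotient: a morphism $\phi\colon A\to B$ in $\mathcal{T}$ descends to an isomorphism in $\mathcal{T}/\mathcal{T}^{fd}$ if and only if the third vertex of any triangle $A\to B\to C\to\Sigma A$ lies in $\mathcal{T}^{fd}$. See \cite[Sections 7-9]{MJ}. Given this, all three conclusions of the lemma reduce to showing that certain objects sit inside $\mathcal{T}^{fd}$.

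First I would handle $f^m$. Its defining triangle is $X^{\leq m}\to X\to X^{\geq m+1}\to\Sigma X^{\leq m}$, so the task is to verify that $X^{\geq m+1}$ lies in $\mathcal{T}^{fd}$. By hypothesis (CY3), $\mathcal{T}^{\geq 0}\subset\mathcal{T}^{fd}$. Since $\mathcal{T}^{fd}$ is a triangulated subcategory, it is closed under $\Sigma$ and $\Sigma^{-1}$, and therefore
\begin{align*}
\mathcal{T}^{\geq m+1}=\Sigma^{-(m+1)}\mathcal{T}^{\geq 0}\subset \Sigma^{-(m+1)}\mathcal{T}^{fd}=\mathcal{T}^{fd}
\end{align*}
for every integer $m$, positive or negative. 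In particular $X^{\geq m+1}\in\mathcal{T}^{fd}$, which shows that $f^m$ becomes an isomorphism in $\mathcal{T}/\mathcal{T}^{fd}$, and therefore $X\cong_{\mathcal{T}/\mathcal{T}^{fd}} X^{\leq m}$.

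Next I would deal with $\xi^{-p}$. From diagram (\ref{diagram_xi}) in Lemma \ref{lemma_inversesys} I have the relation $f^{-p+1}\circ\xi^{-p}=f^{-p}$. Since the previous paragraph shows that both $f^{-p}$ and $f^{-p+1}$ are isomorphisms in $\mathcal{T}/\mathcal{T}^{fd}$, it follows formally that
\begin{align*}
\xi^{-p}=(f^{-p+1})^{-1}\circ f^{-p}
\end{align*}
is an isomorphism in $\mathcal{T}/\mathcal{T}^{fd}$ as well. (Alternatively, one could apply the octahedral axiom to the composite $f^{-p+1}\circ\xi^{-p}=f^{-p}$, obtaining a triangle whose outer vertices are $X^{\geq -p+1}$ and $X^{\geq -p+2}$; since both lie in $\mathcal{T}^{fd}$ and $\mathcal{T}^{fd}$ is triangulated, the cone of $\xi^{-p}$ lies in $\mathcal{T}^{fd}$ too.)

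No step looks like a genuine obstacle; the only point requiring a moment of care is that the inclusion $\mathcal{T}^{\geq m+1}\subset\mathcal{T}^{fd}$ must be argued for all integers $m$, including negative ones, which is where I need to invoke that $\mathcal{T}^{fd}$ is stable under arbitrary shifts rather than only positive ones. Everything else is a direct application of the Verdier localisation criterion together with the relations among truncations already recorded in Lemma \ref{lemma_inversesys}.
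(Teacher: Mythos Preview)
Your proof is correct and follows essentially the same route as the paper: show $\mathcal{T}^{\geq m+1}\subset\mathcal{T}^{fd}$ using closure of $\mathcal{T}^{fd}$ under shifts, conclude that $f^m$ becomes an isomorphism in the quotient, and then deduce that $\xi^m$ is an isomorphism from the factorisation $f^{m+1}\circ\xi^m=f^m$. The only cosmetic difference is that you phrase the first step via the cone criterion for isomorphisms in a Verdier quotient, whereas the paper says directly that $X^{\geq m+1}\cong_{\mathcal{T}/\mathcal{T}^{fd}}0$; these are equivalent formulations.
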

\begin{proof}
By (CY3), we have that $\mathcal{T}^{\geq 0}\subset \mathcal{T}^{fd}$. Since $\mathcal{T}^{fd}$ is triangulated and hence closed under integer powers of $\Sigma$, we have that $\mathcal{T}^{\geq m+1}\subset \mathcal{T}^{fd}$ for any integer $m$. Then the truncation triangle
\begin{align*}
    X^{\leq m}\xrightarrow{f^m} X\xrightarrow{g^{m+1}} X^{\geq m+1}\rightarrow\Sigma X^{\leq m}
\end{align*}
viewed as a triangle in $\mathcal{T}/\mathcal{T}^{fd}$ is such that $X^{\geq m+1}\cong_{\mathcal{T}/\mathcal{T}^{fd}} 0$ and $f^m$ is an isomorphism. Hence $X\cong_{\mathcal{T}/\mathcal{T}^{fd}} X^{\leq m}$. Since $f^{m+1}\circ \xi^m=f^m$, while $f^{m+1}$ and $f^{m}$ become isomorphisms in $\mathcal{T}/\mathcal{T}^{fd}$, so does $\xi^m$.
\end{proof}

\begin{remark}\label{remark_psi}
Given $X$ and $Y$ in $\mathcal{T}$, applying the functor $\mathcal{T}(-,Y)$ to the inverse system from Lemma \ref{lemma_inversesys}, we obtain a direct system of the form
\begin{align*}
\xymatrix@C=3em{
    \mathcal{T}(X,Y)\ar[r]&\mathcal{T}(X^{\leq 0},Y)\ar[r]&\mathcal{T}(X^{\leq -1},Y)\ar[r]&\mathcal{T}(X^{\leq -2},Y)\ar[r]&\cdots.
    }
\end{align*}
Moreover, passing to the quotient category $\mathcal{T}/\mathcal{T}^{fd}$ using the quotient functor $Q:\mathcal{T}\rightarrow \mathcal{T}/\mathcal{T}^{fd}$, we obtain another direct system and a commutative diagram of the form
\begin{align*}
\xymatrix@C=2.5em{
    \mathcal{T}(X,Y)\ar[r]\ar[d]^{Q(-)}&\mathcal{T}(X^{\leq 0},Y)\ar[r]\ar[d]^{Q(-)}&\mathcal{T}(X^{\leq -1},Y)\ar[r]\ar[d]^{Q(-)}&\mathcal{T}(X^{\leq -2},Y)\ar[r]\ar[d]^{Q(-)}&\cdots\\
    \mathcal{T}/\mathcal{T}^{fd}(X,Y)\ar[r]^{\sim}&\mathcal{T}/\mathcal{T}^{fd}(X^{\leq 0},Y)\ar[r]^{\sim}&\mathcal{T}/\mathcal{T}^{fd}(X^{\leq -1},Y)\ar[r]^{\sim}&\mathcal{T}/\mathcal{T}^{fd}(X^{\leq -2},Y)\ar[r]^-{\sim}&\cdots,
    }
\end{align*}
where all the arrows in the bottom row are isomorphisms by Lemma \ref{lemma_iso_quotient_cat}.
Then, by the universal property of direct systems, there exists a unique morphism of the form
\begin{align*}
    \Psi: \underset{q}{\varinjlim}\,\mathcal{T}(X^{\leq -q}, Y)\rightarrow \underset{q}{\varinjlim}\,\mathcal{T}/\mathcal{T}^{fd}(X^{\leq -q}, Y)
\end{align*}
such that the diagram
\begin{align*}
    \xymatrix{
    \mathcal{T}(X^{\leq -p}, Y)\ar[d]\ar[r]^{Q(-)}&\mathcal{T}/\mathcal{T}^{fd}(X^{\leq -p}, Y)\ar[d]^-{\rotatebox{90}{$\sim$}}\\
    \underset{q}{\varinjlim}\,\mathcal{T}(X^{\leq -q}, Y)\ar@{-->}[r]^-{\Psi}&\underset{q}{\varinjlim}\,\mathcal{T}/\mathcal{T}^{fd}(X^{\leq -q}, Y)
    }
\end{align*}
commutes for every $p\geq 0$.
\end{remark}

\begin{lemma}\label{lemma_psi_iso}
The morphism
\begin{align*}
    \Psi: \underset{q}{\varinjlim}\,\mathcal{T}(X^{\leq -q}, Y)\rightarrow \underset{q}{\varinjlim}\,\mathcal{T}/\mathcal{T}^{fd}(X^{\leq -q}, Y)
\end{align*}
from Remark \ref{remark_psi} is an isomorphism.
\end{lemma}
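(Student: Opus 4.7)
The plan is to check that $\Psi$ is both injective and surjective. Since every transition in the right-hand direct system is an isomorphism by Lemma \ref{lemma_iso_quotient_cat}, its direct limit is canonically identified with $\mathcal{T}/\mathcal{T}^{fd}(X^{\leq -p}, Y)$ for any fixed $p$, and $\Psi$ at each level is, after this identification, the quotient functor $Q$.

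The key technical ingredient I will need is the \emph{boundedness claim} that every $S\in\mathcal{T}^{fd}$ lies in $\mathcal{T}^{\geq -N}$ for some integer $N$. To deduce this I would combine Remark \ref{remark_cot_structure} with (CY2): the bounded co-$t$-structure places $S\in\mathcal{T}_{\geq -l}$ for some $l$, whence $\Sigma^k S\in\mathcal{T}_{\geq 1}$ for $k$ sufficiently negative; since $M\in\mathcal{T}_{\leq 0}$, the co-$t$-structure torsion pair forces $\mathcal{T}(\Sigma^k S, M)=0$ for all such $k$, and (CY2) then converts this into $\mathcal{T}(\Sigma^j M, S)=0$ for $j$ sufficiently large, which is exactly $S\in\mathcal{T}^{\geq -N}$ for some $N$.

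With this in hand, surjectivity follows by lifting roofs. A class in the right-hand limit is represented by some $\varphi\in\mathcal{T}/\mathcal{T}^{fd}(X^{\leq -p}, Y)$, and the calculus of fractions expresses $\varphi$ as a roof $X^{\leq -p}\xleftarrow{s} Z\xrightarrow{g} Y$ with $C:=\mathrm{cone}(s)\in\mathcal{T}^{fd}$. Applying the boundedness claim to $C$ gives $C\in\mathcal{T}^{\geq -N}$, so for $q\geq N+1$ the $t$-structure torsion pair $(\mathcal{T}^{\leq -q},\mathcal{T}^{\geq -q+1})$ forces $\mathcal{T}(X^{\leq -q}, C)=0$. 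Applying $\mathcal{T}(X^{\leq -q},-)$ to the triangle determined by $s$, the composite transition morphism $\xi:X^{\leq -q}\to X^{\leq -p}$ from the inverse system of Lemma \ref{lemma_inversesys} lifts along $s$ to some $\tilde\xi:X^{\leq -q}\to Z$, and a direct computation shows that $f:=g\circ\tilde\xi\in\mathcal{T}(X^{\leq -q}, Y)$ satisfies $Q(f)=\varphi\circ Q(\xi)$, so $[f]$ maps under $\Psi$ to the chosen class. Injectivity is dual: if $f\in\mathcal{T}(X^{\leq -p}, Y)$ is in the kernel of $\Psi$ then $Q(f)=0$ in $\mathcal{T}/\mathcal{T}^{fd}$, so $f$ factors as $X^{\leq -p}\to S\to Y$ with $S\in\mathcal{T}^{fd}$; the boundedness claim gives $S\in\mathcal{T}^{\geq -N}$, and for $q\geq N+1$ the torsion pair yields $\mathcal{T}(X^{\leq -q}, S)=0$, so $f\circ\xi=0$, i.e.\ $[f]=0$ already at level $q$.

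The main obstacle I expect is the boundedness claim itself, since only the $\leq$ halves of the $t$-structure and the co-$t$-structure are identified in Remark \ref{remark_cot_structure}; transferring a co-$t$-structure bound on $S$ to a $t$-structure bound therefore forces essential use of the relative Calabi--Yau duality, and one must be careful with shift conventions throughout. Once the boundedness claim is in place, everything else reduces to a routine application of torsion pairs together with the universal property of direct limits.
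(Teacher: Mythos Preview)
Your approach is essentially the same as the paper's: both prove injectivity and surjectivity of $\Psi$ by using the calculus of fractions together with the key fact that every object of $\mathcal{T}^{fd}$ lies in $\mathcal{T}^{\geq -N}$ for some $N$, so that morphisms from $X^{\leq -q}$ into (or through) such objects vanish for $q$ large. The only substantive difference is that the paper simply cites \cite[Lemma 4.11]{IYa} for this boundedness fact, whereas you sketch an independent derivation via the bounded co-$t$-structure and (CY2); your sketch is correct, and indeed this is essentially how \cite[Lemma 4.11]{IYa} is proved. The remaining differences --- your use of left fractions versus the paper's right fractions, and your phrasing of injectivity as ``$f$ factors through an object of $\mathcal{T}^{fd}$'' versus the paper's ``$t\circ h=0$ for some $t$ in the multiplicative system'' --- are cosmetic.
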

\begin{proof}
We first prove that $\Psi$ is surjective. Let $\gamma$ be an element in $\varinjlim\,\mathcal{T}/\mathcal{T}^{fd}(X^{\leq -q}, Y)$. Note that by \cite[Lemma 5.30(i)]{Rot}, we have that $\gamma$ comes from an element in one of the Hom-spaces of the direct system and since these are all isomorphic, we may assume $\gamma$ comes from an element of the form
\begin{align*}
    \alpha=\left[ \vcenter{\xymatrix@R=1em @C=1em{
& Z&
\\ X\ar[ru]^h& &Y\ar[lu]_{s}}}\right]\in\mathcal{T}/\mathcal{T}^{fd}(X,Y),
\end{align*}
where the triangle extending $s$, say
\begin{align*}
    \xymatrix@C=2.5em{
    W\ar[r]&Y\ar[r]^-s&Z\ar[r]^-{g}&\Sigma W,
    }
\end{align*}
has $W$ and $\Sigma W$ in $\mathcal{T}^{fd}$ since $s$ is in the multiplicative system being inverted. By \cite[Lemma 4.11]{IYa} there exists an integer $p$ such that
\begin{align*}
    \Sigma W\in(\Sigma^{>p}\mathcal{M})^{\perp_\mathcal{T}}=\mathcal{T}^{\geq -p}=(\mathcal{T}^{\leq -p-1})^{\perp_\mathcal{T}}.
\end{align*}
Hence $X^{\leq -p-1}\in\mathcal{T}^{\leq -p-1}$ is such that $\mathcal{T}(X^{\leq -p-1}, \Sigma W)=0$.
Then $g\circ h\circ f^{-p-1}=0$, and we have a commutative diagram
\begin{align*}
    \xymatrix@C=5em{
    && X^{\leq -p-1}\ar[d]^-{h\circ f^{-p-1}}\ar[rd]^0\ar@{-->}[ld]_{\exists\, l}\\
    W\ar[r]&Y\ar[r]_s&Z\ar[r]_{g}&\Sigma W.
    }
\end{align*}
Since $s\circ l= h\circ f^{-p-1}$, we have that
\begin{align*}
    Q(l)= \left[ \vcenter{\xymatrix@R=1em @C=1em{
& Y&
\\ X^{\leq -p-1}\ar[ru]^l& &Y\ar@{=}[lu]}}\right]=
\left[ \vcenter{\xymatrix@R=1em @C=1em{
& Z&
\\ X^{\leq -p-1}\ar[ru]^{h\circ f^{-p-1}}& &Y\ar[lu]_{s}}}\right].
\end{align*}
Consider the commutative diagram
\begin{align}\label{diagram_lim_comm}
    \xymatrix{
    &\mathcal{T}/\mathcal{T}^{fd}(X, Y)\ar[d]_-{\rotatebox{90}{$\sim$}}^{\overline{\mu}}\\
    \mathcal{T}(X^{\leq -p}, Y)\ar[d]_{\nu^{-p}}\ar[r]^-{Q(-)}&\mathcal{T}/\mathcal{T}^{fd}(X^{\leq -p}, Y)\ar[d]_-{\rotatebox{90}{$\sim$}}^{{\overline{\nu}^{-p}}}\\
    \underset{q}{\varinjlim}\,\mathcal{T}(X^{\leq -q}, Y)\ar[r]^-{\Psi}&\underset{q}{\varinjlim}\,\mathcal{T}/\mathcal{T}^{fd}(X^{\leq -q}, Y).
    }
\end{align}
Then, we have
\begin{align*}
    \gamma=\overline{\nu}^{\,-p}\circ\overline{\mu}(\alpha)=\overline{\nu}^{-p}\circ Q(l)=\Psi\circ \nu^{-p}(l),
\end{align*}
so $\Psi$ is surjective.

It remains to show that $\Psi$ is also injective. Consider an element $l$ in $\varinjlim\,\mathcal{T}(X^{\leq -q}, Y)$ such that $\Psi(l)=0$. Note that $l$ comes from an element in one of the Hom-spaces of the direct system, say from the element $h\in\mathcal{T}(X^{\leq -p}, Y)$. Then, considering the commutative diagram (\ref{diagram_lim_comm}), we have 
\begin{align*}
    \overline{\nu}^{-p}\circ Q(h)=\Psi\circ \nu^{-p}(h)=\Psi(l)=0.
\end{align*}
Since $\overline{\nu}^{-p}$ is an isomorphism, we have that $Q(h)=0$. Hence there exists a morphism $t: Y\rightarrow K$ in the multiplicative system being inverted when we pass to $\mathcal{T}/\mathcal{T}^{fd}$ such that $t\circ h=0$. Consider the triangle extending $t$, say
\begin{align*}
    \xymatrix@C=2.5em{
    W\ar[r]^w &Y\ar[r]^t&K\ar[r]&\Sigma W,
    }
\end{align*}
where $W\in\mathcal{T}^{fd}$. Since $t\circ h=0$, there exists a morphism $g:X^{\leq - p}\rightarrow W$ such that $h=w\circ g$, that is such that the following commutes:
\begin{align*}
    \xymatrix@C=5em{
    & X^{\leq -p}\ar[d]^h\ar[rd]^0\ar@{-->}[ld]_g\\
    W\ar[r]^w &Y\ar[r]^t&K\ar[r]&\Sigma W.
    }
\end{align*}
Moreover, as $W\in\mathcal{T}^{fd}$, by \cite[Lemma 4.11]{IYa} there is an integer $i$ such that
\begin{align*}
    W\in(\Sigma^{>i}\mathcal{M})^{\perp_\mathcal{T}}=\mathcal{T}^{\geq -i}=(\mathcal{T}^{\leq -i-1})^{\perp_\mathcal{T}}.
\end{align*}
Since $X^{\leq -i-1}\in\mathcal{T}^{\leq -i-1}$, we have that $\mathcal{T}(X^{\leq -i-1}, W)=0$. We now consider two cases. First, if $p-1\geq i$, we have that $W\in \mathcal{T}^{\geq -p+1}=(\mathcal{T}^{\leq -p})^{\perp_\mathcal{T}}$, so that $\mathcal{T}(X^{\leq -p}, W)=0$ and $h=w\circ g=0$ implying that $l=\nu^{-p}(h)=0$. In the other case, that is $p-1< i$, the inverse system gives us a morphism $\xi:X^{\leq -i-1}\rightarrow X^{\leq -p}$. Then, as $\mathcal{T}(X^{\leq -i-1}, W)=0$, we have that $g\circ \xi=0$ and so $h\circ\xi=w\circ g\circ \xi=0$. Consider the commutative diagram
\begin{align*}
    \xymatrix@C=3em{
    \mathcal{T}(X^{\leq -p}, Y)\ar[r]^{\xi^*}\ar[rd]_{\nu^{-p}}& \mathcal{T}(X^{\leq -i-1}, Y)\ar[d]^{\nu^{-i-1}}\\
    & \underset{q}{\varinjlim}\,\mathcal{T}(X^{\leq -q}, Y).
    }
\end{align*}
We have that $0=\nu^{-i-1}(0)=\nu^{-i-1}(h\circ\xi)=\nu^{-i-1}\circ\xi^*(h)=\nu^{-p}(h)=l$. Hence $\Psi$ is injective.
\end{proof}

\begin{notation}\label{notation_Cpx}
Given $X\in\mathcal{T}$, note that diagram (\ref{diagram_xi}) from the proof of Lemma \ref{lemma_inversesys} can also be built for non-positive integers. Then, for any integer $p$, the morphism $\xi^{-p}$ is defined. Let the triangle in $\mathcal{T}$ extending $\xi^{-p}$ be
\begin{align*}
    \xymatrix@C=3em{
    X^{\leq -p}\ar[r]^{\xi^{-p}}& X^{\leq -p+1}\ar[r]&C^{-p}_X\ar[r]& \Sigma X^{\leq -p}.
    }
\end{align*}
\end{notation}

\begin{lemma}\label{lemma_Cp}
Given $X\in\mathcal{T}$ and an integer $p$, we have that the object $C^{-p}_X$ is in $\mathcal{T}^{-p+1}:=\mathcal{T}^{\leq -p+1}\cap\mathcal{T}^{\geq -p+1}$. In particular, $C^{-p}_X\in\mathcal{T}^{fd}$ by (CY3).
\end{lemma}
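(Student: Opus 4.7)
The strategy is to apply the octahedral axiom to the factorisation $f^{-p+1}\circ \xi^{-p} = f^{-p}$ of truncation morphisms. This produces, alongside the defining triangle of $C^{-p}_X$, a companion triangle relating $C^{-p}_X$ to the ``upper'' truncations $X^{\geq -p+1}$ and $X^{\geq -p+2}$. The two triangles together will sandwich $C^{-p}_X$ into both the aisle at level $-p+1$ and the co-aisle at level $-p+1$.

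\textbf{Step 1 (octahedron).} I would apply the octahedral axiom to $X^{\leq -p}\xrightarrow{\xi^{-p}} X^{\leq -p+1} \xrightarrow{f^{-p+1}} X$. Three of the four resulting triangles are the defining triangle of $C^{-p}_X$ from Notation~\ref{notation_Cpx} and the truncation triangles associated to $X$ at levels $-p$ and $-p+1$ (as in Definition~\ref{defn_trunc_triangle}). The fourth triangle produced by the octahedron has the form
\begin{align*}
    C^{-p}_X \longrightarrow X^{\geq -p+1} \longrightarrow X^{\geq -p+2} \longrightarrow \Sigma C^{-p}_X.
\end{align*}

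\textbf{Step 2 ($C^{-p}_X \in \mathcal{T}^{\leq -p+1}$).} The defining triangle $X^{\leq -p}\to X^{\leq -p+1}\to C^{-p}_X \to \Sigma X^{\leq -p}$ has its first two vertices in $\mathcal{T}^{\leq -p+1}$, using the inclusion $\mathcal{T}^{\leq -p}\subset \mathcal{T}^{\leq -p+1}$. Since the aisle $\mathcal{T}^{\leq -p+1} = {}^{\perp_\mathcal{T}}\mathcal{T}^{\geq -p+2}$ is closed under the third vertex of a triangle (apply $\Hom(-,Z)$ for $Z\in \mathcal{T}^{\geq -p+2}$ and use the long exact sequence together with $\Sigma \mathcal{T}^{\leq -p+1}\subset \mathcal{T}^{\leq -p+1}$), we conclude $C^{-p}_X \in \mathcal{T}^{\leq -p+1}$.

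\textbf{Step 3 ($C^{-p}_X \in \mathcal{T}^{\geq -p+1}$).} From the companion triangle in Step~1 we have that the middle and right vertices lie in $\mathcal{T}^{\geq -p+1}$, using the inclusion $\mathcal{T}^{\geq -p+2}\subset \mathcal{T}^{\geq -p+1}$. Dually to Step~2, the co-aisle $\mathcal{T}^{\geq -p+1} = (\mathcal{T}^{\leq -p})^{\perp_\mathcal{T}}$ is closed under the first vertex of a triangle, so $C^{-p}_X \in \mathcal{T}^{\geq -p+1}$. Combining Steps~2 and~3 gives $C^{-p}_X \in \mathcal{T}^{-p+1}$. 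For the final sentence, $\mathcal{T}^{-p+1}\subset \mathcal{T}^{\geq -p+1}$, and as already observed inside the proof of Lemma~\ref{lemma_iso_quotient_cat}, (CY3) together with $\mathcal{T}^{fd}$ being triangulated yields $\mathcal{T}^{\geq m}\subset \mathcal{T}^{fd}$ for every integer $m$, so $C^{-p}_X\in \mathcal{T}^{fd}$.

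There is no real obstacle; the only thing requiring a little care is bookkeeping the octahedron so that the fourth triangle genuinely relates the cone $C^{-p}_X$ to the co-aisle truncations $X^{\geq -p+1}$ and $X^{\geq -p+2}$, rather than to some other cone. Once this is set up, the two closure properties of $t$-structures finish the argument immediately.
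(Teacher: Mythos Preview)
Your proposal is correct and follows essentially the same route as the paper: both apply the octahedral axiom to the factorisation $f^{-p+1}\circ\xi^{-p}=f^{-p}$ to obtain the companion triangle $C^{-p}_X\to X^{\geq -p+1}\to X^{\geq -p+2}\to\Sigma C^{-p}_X$, and then read off the two membership statements from the closure properties of the aisle and co-aisle. The only cosmetic difference is that the paper phrases both inclusions via ``closed under extensions'' (working with the rotated triangle so that $\Sigma C^{-p}_X$ appears as an extension inside $\mathcal{T}^{\geq -p}$), whereas you phrase them as closure under cones/fibers via the long exact $\Hom$ sequence; these are equivalent formulations of the same standard fact about $t$-structures.
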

\begin{proof}
Consider the truncation triangles associated to $X$ with respect to the $t$-structures $(\mathcal{T}^{\leq -p},\mathcal{T}^{\geq -p})$ and $(\mathcal{T}^{\leq -p+1},\mathcal{T}^{\geq -p+1})$ from Definition \ref{defn_trunc_triangle}. By the octahedral axiom, we have a commutative diagram of triangles in $\mathcal{T}$ of the form
\begin{align}\label{diagram_Cpx}
   \xymatrix@C=3em{
    X^{\leq -p}\ar[r]^{f^{-p}}\ar[d]^{\xi^{-p}}& X\ar[r]\ar@{=}[d]& X^{\geq -p+1}\ar[r]\ar[d]&\Sigma X^{\leq -p}\ar[d]\\
    X^{\leq -p+1}\ar[r]^{f^{-p+1}}\ar[d]& X\ar[r]\ar[d]& X^{\geq -p+2}\ar[r]\ar[d]&\Sigma X^{\leq -p+1}\ar[d]\\
    C^{-p}_X\ar[r]\ar[d]&0\ar[r]\ar[d]& \Sigma C^{-p}_X\ar@{=}[r]\ar[d]&\Sigma C^{-p}_X\ar[d]\\
    \Sigma X^{\leq -p}\ar[r]&\Sigma X\ar[r]& \Sigma X^{\geq -p+1}\ar[r]&\Sigma^2 X^{\leq -p}.
    }
\end{align}
Note that
\begin{align*}
    X^{\leq -p}\in \mathcal{T}^{\leq -p}, \, X^{\leq -p+1}\in \mathcal{T}^{\leq -p+1} \text{ and } \Sigma X^{\leq -p}\in \Sigma\mathcal{T}^{\leq -p}=\mathcal{T}^{\leq -p-1}.
\end{align*}
Since $\mathcal{T}^{\leq -p-1}\subseteq \mathcal{T}^{\leq -p}\subseteq \mathcal{T}^{\leq -p+1}$, and $\mathcal{T}^{\leq -p+1}$ is closed under extensions, we have that $C^{-p}_X\in \mathcal{T}^{\leq -p+1}$.
Moreover, note that
\begin{align*}
    X^{\geq -p+1}\in \mathcal{T}^{\geq -p+1}, \, X^{\geq -p+2}\in \mathcal{T}^{\geq -p+2} \text{ and } \Sigma X^{\geq -p+1}\in \Sigma\mathcal{T}^{\geq -p+1}=\mathcal{T}^{\geq -p}.
\end{align*}
Since $\mathcal{T}^{\geq -p+2}\subseteq \mathcal{T}^{\geq -p+1}\subseteq \mathcal{T}^{\geq -p}$, and $\mathcal{T}^{\geq -p}$ is closed under extensions, we have that $\Sigma C^{-p}_X\in \mathcal{T}^{\geq -p}=\Sigma \mathcal{T}^{\geq -p+1}$ and so $C^{-p}_X\in \mathcal{T}^{\geq -p+1}$.
\end{proof}

\begin{lemma}\label{lemma_pbig_Y_thick}
Let $X$ and $Y$ be objects in $\mathcal{T}$.
\begin{enumerate}[label=(\alph*)]
\item  Let $p\gg 0$ be an integer, then $\mathcal{T}(Y,C^{-p}_X)=0$.
\item Let $q\gg0$ be an integer, then $\mathcal{T}(X, Y^{\leq -q})=0$.
\end{enumerate}
\end{lemma}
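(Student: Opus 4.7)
The plan is to prove both parts by invoking the bounded co-$t$-structure from Remark \ref{remark_cot_structure}, together with the identification $\mathcal{T}^{\leq i} = \mathcal{T}_{\leq i}$ recorded there. The basic mechanism is the same for both statements: once $p$ or $q$ is taken large enough, the source object lies in the ``$\geq$-side'' and the target in the ``$\leq$-side'' of a shifted torsion pair coming from the co-$t$-structure, which forces the Hom-space to vanish.

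For part (a), Lemma \ref{lemma_Cp} gives $C^{-p}_X \in \mathcal{T}^{\leq -p+1} = \mathcal{T}_{\leq -p+1}$. Because the co-$t$-structure is bounded, we can fix an integer $l$ with $Y \in \Sigma^l \mathcal{T}_{\geq 0} = \mathcal{T}_{\geq -l}$. Using the nesting $\mathcal{T}_{\geq m+1} \subseteq \mathcal{T}_{\geq m}$, the membership $Y \in \mathcal{T}_{\geq -l}$ upgrades to $Y \in \mathcal{T}_{\geq -p+2}$ as soon as $-p+2 \leq -l$, i.e.\ $p \geq l+2$. Shifting the defining torsion pair $(\mathcal{T}_{\geq 1}, \mathcal{T}_{\leq 0})$ by $p-1$ yields the torsion pair $(\mathcal{T}_{\geq -p+2}, \mathcal{T}_{\leq -p+1})$, whose orthogonality then gives $\mathcal{T}(Y, C^{-p}_X) = 0$ for all such $p$.

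For part (b), the argument is essentially the same with the roles of $X$ and $Y$ adjusted. We have $Y^{\leq -q} \in \mathcal{T}^{\leq -q} = \mathcal{T}_{\leq -q}$ by definition of the truncation, and boundedness gives an integer $l'$ with $X \in \mathcal{T}_{\geq -l'}$. For $q \geq l'+1$ we have $-q+1 \leq -l'$, so $X \in \mathcal{T}_{\geq -q+1}$, and the shifted torsion pair $(\mathcal{T}_{\geq -q+1}, \mathcal{T}_{\leq -q})$ forces $\mathcal{T}(X, Y^{\leq -q}) = 0$.

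There is no real obstacle here: the only thing to be careful about is bookkeeping, namely making sure the index $-p+1$ (respectively $-q$) on the target matches with an index $-p+2$ (respectively $-q+1$) on the source so that the relevant shifted torsion pair applies. The substantive input—that every object of $\mathcal{T}$ lies in some $\mathcal{T}_{\geq m}$—has already been extracted from boundedness of the co-$t$-structure in Remark \ref{remark_cot_structure}, so the lemma reduces to this indexing check.
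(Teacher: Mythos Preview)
Your proof is correct and follows essentially the same approach as the paper: both arguments use the bounded co-$t$-structure from Remark~\ref{remark_cot_structure}, the identification $\mathcal{T}^{\leq i}=\mathcal{T}_{\leq i}$, and the vanishing coming from a shifted torsion pair $(\mathcal{T}_{\geq m},\mathcal{T}_{\leq m-1})$. The only cosmetic difference is that the paper fixes the torsion pair at the index determined by $Y$ (resp.\ $X$) and nests the target into its $\leq$-side, whereas you fix the torsion pair at the index determined by $C^{-p}_X$ (resp.\ $Y^{\leq -q}$) and nest the source into its $\geq$-side---the bookkeeping is equivalent.
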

\begin{proof}
(a) By Remark \ref{remark_cot_structure}, $(\mathcal{T}_{\geq 0}, \mathcal{T}_{\leq 0})$ is a bounded co-$t$-structure. Hence there is an integer $i$ such that $Y\in\Sigma^{-i}\mathcal{T}_{\geq 0}=\mathcal{T}_{\geq i}$. Pick an integer $p\geq -i+2$. Then $-p+1\leq i-1$ and, using Lemma \ref{lemma_Cp}, we have that
\begin{align*}
C^{-p}_X\in\mathcal{T}^{\leq -p+1}\subseteq \mathcal{T}^{\leq i-1}.
\end{align*}
Moreover, by Remark \ref{remark_cot_structure}, we have that $\mathcal{T}^{\leq i-1}=\mathcal{T}_{\leq i-1}$, and so $C^{-p}_X\in\mathcal{T}_{\leq i-1}$. Hence, as $(\mathcal{T}_{\geq i}, \mathcal{T}_{\leq i-1})$ is a torsion pair, we have that $\mathcal{T}(Y,C^{-p}_X)=0$.

(b) By Remark \ref{remark_cot_structure}, there is an integer $i$ such that $X\in \Sigma^{-i}\mathcal{T}_{\geq 0}=\mathcal{T}_{\geq i}$. Pick an integer $q>-i$. Then $-q\leq i-1$ and we have that
\begin{align*}
    Y^{\leq -q}\in \mathcal{T}^{\leq -q}\subseteq \mathcal{T}^{\leq i-1}=\mathcal{T}_{\leq i-1},
\end{align*}
where the last equality holds by Remark \ref{remark_cot_structure}. Hence, as $(\mathcal{T}_{\geq i}, \mathcal{T}_{\leq i-1})$ is a torsion pair, we have that $\mathcal{T}(X,Y^{\leq -q})=0$.
\end{proof}

\begin{theorem}\label{thm_directstab}
Let $X$ and $Y$ be objects in $\mathcal{T}$.
\begin{enumerate}[label=(\alph*)]
\item For $p\gg0$, the direct system
\begin{align*}
\xymatrix@C=3em{
    \mathcal{T}(X,Y)\ar[r]&\mathcal{T}(X^{\leq 0},Y)\ar[r]&\mathcal{T}(X^{\leq -1},Y)\ar[r]&\mathcal{T}(X^{\leq -2},Y)\ar[r]&\cdots
    }
\end{align*}
stabilizes. Moreover, we have that
    $\underset{q}{\varinjlim}\,\mathcal{T}(X^{\leq -q}, Y)\cong \mathcal{T}/\mathcal{T}^{fd}(X, Y)$.
\item For $X$ and $Y$ in $\mathcal{T}$, we have that
    \begin{align*}
       \underset{q}{\varprojlim} \Big(\underset{p}{\varinjlim}\,\mathcal{T}(\Sigma^{-1}X^{\geq -p+1},Y^{\leq -q})\Big)\cong \mathcal{T}/\mathcal{T}^{fd}(X,Y).
    \end{align*}
\end{enumerate}
\end{theorem}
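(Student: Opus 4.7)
The plan for part (a) is to first show that the direct system stabilizes, then identify its colimit. For stabilization, I apply $\mathcal{T}(-, Y)$ to the triangle
\begin{align*}
X^{\leq -p} \xrightarrow{\xi^{-p}} X^{\leq -p+1} \to C^{-p}_X \to \Sigma X^{\leq -p}
\end{align*}
of Notation \ref{notation_Cpx}; the transition map $\mathcal{T}(X^{\leq -p+1}, Y) \to \mathcal{T}(X^{\leq -p}, Y)$ sits in the resulting long exact sequence flanked by $\mathcal{T}(C^{-p}_X, Y)$ and $\mathcal{T}(\Sigma^{-1} C^{-p}_X, Y)$, so it suffices to show both vanish for $p \gg 0$. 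Since $C^{-p}_X \in \mathcal{T}^{fd}$ by Lemma \ref{lemma_Cp}, the Calabi-Yau property (CY2) rewrites the dual spaces of these Homs as $\mathcal{T}(Y, \Sigma^n C^{-p}_X)$ and $\mathcal{T}(Y, \Sigma^{n+1} C^{-p}_X)$; these shifted cofibres lie in $\mathcal{T}^{\leq -p-n+1}$ and $\mathcal{T}^{\leq -p-n}$, which by Remark \ref{remark_cot_structure} coincide with the corresponding co-$t$-structure pieces, so the torsion-pair argument from the proof of Lemma \ref{lemma_pbig_Y_thick}(a) yields the vanishing once $p$ exceeds a bound depending on the co-$t$-structure location of $Y$. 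For the colimit, Remark \ref{remark_psi} together with Lemma \ref{lemma_psi_iso} give $\varinjlim_q \mathcal{T}(X^{\leq -q}, Y) \cong \varinjlim_q \mathcal{T}/\mathcal{T}^{fd}(X^{\leq -q}, Y)$, and by Lemma \ref{lemma_iso_quotient_cat} every transition map of the latter system is an isomorphism, so its colimit equals $\mathcal{T}/\mathcal{T}^{fd}(X, Y)$.

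For part (b), I apply $\mathcal{T}(-, Y^{\leq -q})$ to the rotated truncation triangle $\Sigma^{-1} X^{\geq -p+1} \to X^{\leq -p} \xrightarrow{f^{-p}} X \xrightarrow{g^{-p+1}} X^{\geq -p+1}$. The transition map $\mathcal{T}(X^{\leq -p}, Y^{\leq -q}) \to \mathcal{T}(\Sigma^{-1} X^{\geq -p+1}, Y^{\leq -q})$ in the long exact sequence is flanked by $\mathcal{T}(X, Y^{\leq -q})$ and $\mathcal{T}(\Sigma^{-1} X, Y^{\leq -q}) \cong \mathcal{T}(X, \Sigma Y^{\leq -q})$. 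Both vanish for $q \gg 0$: the first by Lemma \ref{lemma_pbig_Y_thick}(b), and the second by the same argument since $\Sigma Y^{\leq -q} \in \mathcal{T}^{\leq -q-1}$. Hence for $q$ large, $\mathcal{T}(X^{\leq -p}, Y^{\leq -q}) \cong \mathcal{T}(\Sigma^{-1} X^{\geq -p+1}, Y^{\leq -q})$ naturally in $p$. Taking $\varinjlim_p$ and invoking part (a) applied to the pair $(X, Y^{\leq -q})$, together with the isomorphism $\mathcal{T}/\mathcal{T}^{fd}(X, Y^{\leq -q}) \cong \mathcal{T}/\mathcal{T}^{fd}(X, Y)$ coming from Lemma \ref{lemma_iso_quotient_cat}, I obtain
\begin{align*}
\varinjlim_p \mathcal{T}(\Sigma^{-1} X^{\geq -p+1}, Y^{\leq -q}) \cong \mathcal{T}/\mathcal{T}^{fd}(X, Y)
\end{align*}
for all sufficiently large $q$. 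These canonical isomorphisms are compatible with the inverse system in $q$, which is therefore eventually constant with isomorphism transitions, so $\varprojlim_q$ returns $\mathcal{T}/\mathcal{T}^{fd}(X, Y)$.

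The main obstacle, in my estimation, is the vanishing of $\mathcal{T}(C^{-p}_X, Y)$ (and its $\Sigma^{-1}$-shift) for $p \gg 0$ in part (a): Lemma \ref{lemma_pbig_Y_thick}(a) directly provides only the opposite direction $\mathcal{T}(Y, C^{-p}_X) = 0$, so flipping to the direction needed here forces a detour through the Calabi-Yau duality (CY2) and careful bookkeeping of shifts against the co-$t$-structure bound for $Y$. Once this conversion is handled, everything else is formal: the colimit identification in (a) is a direct application of Lemmas \ref{lemma_psi_iso} and \ref{lemma_iso_quotient_cat}, and (b) reduces to comparing two long exact sequences together with part (a) and Lemma \ref{lemma_pbig_Y_thick}(b).
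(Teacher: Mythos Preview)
Your proposal is correct and follows essentially the same route as the paper: for (a) you apply $\mathcal{T}(-,Y)$ to the cofibre triangle of $\xi^{-p}$, use (CY2) to dualize the flanking terms, and invoke the co-$t$-structure bound on $Y$ exactly as in Lemma~\ref{lemma_pbig_Y_thick}(a); for (b) you apply $\mathcal{T}(-,Y^{\leq -q})$ to the truncation triangle, kill the outer terms for $q\gg 0$ via Lemma~\ref{lemma_pbig_Y_thick}(b), and reduce to part (a). One small slip: $D\mathcal{T}(\Sigma^{-1}C^{-p}_X,Y)\cong \mathcal{T}(Y,\Sigma^{n-1}C^{-p}_X)$, not $\Sigma^{n+1}$, but this is harmless since either shift still lands in $\mathcal{T}^{\leq -p+\mathrm{const}}$ and the vanishing for $p\gg 0$ is unaffected.
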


\begin{remark}
Note that the above result resembles some previous results. Despite the different setup, it is worth pointing out the similarity between the formula in part (a) and (2.2.1) from \cite[proof of Proposition 2.2]{AZ}. Moreover, the fact that the direct system in (a) stabilizes resembles \cite[Proposition 3.13]{AZ}. 
\end{remark}

\begin{proof}[Proof of Theorem \ref{thm_directstab}]
(a) Consider the triangle
\begin{align*}
\xymatrix@C=3em{
    X^{\leq -p}\ar[r]^{\xi^{-p}}& X^{\leq -p+1}\ar[r]&C^{-p}_X\ar[r]& \Sigma X^{\leq -p}
    }
\end{align*}
and the exact sequence obtained by applying the functor $\mathcal{T}(-,Y)$ to it:
\begin{align*}
\xymatrix@C=3em{
    \mathcal{T}(C^{-p}_X,Y)\ar[r]&\mathcal{T}(X^{\leq -p+1},Y)\ar[r]&\mathcal{T}(X^{\leq -p},Y)\ar[r]&\mathcal{T}(\Sigma^{-1}C^{-p}_X,Y).
    }
\end{align*}
Since $C^{-p}_X\in\mathcal{T}^{fd}$, we have $\mathcal{T}(C^{-p}_X,Y)\cong D\mathcal{T}(Y,\Sigma^{n}C^{-p}_X)$ and $\mathcal{T}(\Sigma^{-1}C^{-p}_X,Y)\cong D\mathcal{T}(Y,\Sigma^{n-1}C^{-p}_X)$. For $p\gg0$, by Lemma \ref{lemma_pbig_Y_thick}(a), we have that $\mathcal{T}(X^{\leq -p+1},Y)\cong \mathcal{T}(X^{\leq -p},Y)$. Hence the direct system stabilizes as claimed and for $p\gg0$ we have that
\begin{align*}
\underset{q}{\varinjlim}\,\mathcal{T}(X^{\leq -q}, Y)\cong \mathcal{T}(X^{\leq -p}, Y).
\end{align*}
Then, since $\Psi$ is an isomorphism by Lemma \ref{lemma_psi_iso}, for $p\gg0$ we obtain the isomorphism
\begin{align*}
    \xymatrix@C=5em{
    \underset{q}{\varinjlim}\,\mathcal{T}(X^{\leq -q}, Y)\ar[r]^-{\Psi}_-{\sim}& \underset{q}{\varinjlim}\,\mathcal{T}/\mathcal{T}^{fd}(X^{\leq -q}, Y)\ar[r]_-{\sim}^-{(\overline{\mu})^{-1}\circ ({\overline{\nu}^{-p}})^{-1}}& 
    \mathcal{T}/\mathcal{T}^{fd}(X, Y)}.
\end{align*}

(b) Let $X$ and $Y$ be objects in $\mathcal{T}$ and $p,\, q$ be integers. Applying the functor $\mathcal{T}(-,Y^{\leq -q})$ to the truncation triangle
\begin{align}\label{diagram_triangleX}
    \xymatrix@C=3em{
    X^{\leq -p}\ar[r]& X\ar[r]&X^{\geq -p+1}\ar[r]& \Sigma X^{\leq -p},
    }
\end{align}
we obtain the exact sequence
\begin{align}\label{diagram_prop1}
    \xymatrix@C=2.5em{
    \mathcal{T}(X,Y^{\leq -q})\ar[r]&\mathcal{T}(X^{\leq -p},Y^{\leq -q})\ar[r]&\mathcal{T}(\Sigma^{-1}X^{\geq -p+1},Y^{\leq -q})\ar[r]&\mathcal{T}(\Sigma^{-1}X,Y^{\leq -q}).
    }
\end{align}
Taking the direct limit of (\ref{diagram_prop1}) with respect to $p$, we obtain the exact sequence
\begin{align}\label{diagram_prop4}
     \xymatrix@C=2.5em{
    \mathcal{T}(X,Y^{\leq -q})\ar[r]&\mathcal{T}/\mathcal{T}^{fd}(X,Y)\ar[r]&\underset{p}{\varinjlim}\,\mathcal{T}(\Sigma^{-1}X^{\geq -p+1},Y^{\leq -q})\ar[r]&\mathcal{T}(\Sigma^{-1}X,Y^{\leq -q}).
    }
\end{align}
In the above, the first and last terms are unchanged since they do not depend on $p$. Moreover, for the second term, we observed that
\begin{align*}
    \underset{p}{\varinjlim}\, \mathcal{T}(X^{\leq -p}, Y^{\leq -q})\cong \mathcal{T}/\mathcal{T}^{fd}(X,Y^{\leq -q})\cong \mathcal{T}/\mathcal{T}^{fd}(X,Y),
\end{align*}
where we used part (a) for the first isomorphism and Lemma \ref{lemma_iso_quotient_cat} for the second one.
For $q\gg0$, by Lemma \ref{lemma_pbig_Y_thick}(b), we have that
\begin{align*}
    \mathcal{T}(X,Y^{\leq -q})=0 \text{ and } \mathcal{T}(\Sigma^{-1}X,Y^{\leq -q})=0.
\end{align*}
Hence, for $q\gg0$, we have that (\ref{diagram_prop4}) is the exact sequence
\begin{align*}
     \xymatrix@C=2.5em{
    0\ar[r]&\mathcal{T}/\mathcal{T}^{fd}(X,Y)\ar[r]^-{\sim}&\underset{p}{\varinjlim}\,\mathcal{T}(\Sigma^{-1}X^{\geq -p+1},Y^{\leq -q})\ar[r]&0.
    }
\end{align*}
By \cite[Exercise 5.22(ii)]{Rot}, chopping off the tail of an inverse system, we obtain the same inverse limit. Hence, we have that
\begin{align*}
       \underset{q}{\varprojlim} \Big(\underset{p}{\varinjlim}\,\mathcal{T}(\Sigma^{-1}X^{\geq -p+1},Y^{\leq -q})\Big)\cong \mathcal{T}/\mathcal{T}^{fd}(X,Y).
    \end{align*}
\end{proof}
When $Y$ is a shift of $M$, we can say more about when the direct system in Theorem \ref{thm_directstab}(a) stabilizes and we obtain the following result.

\begin{corollary}\label{coro_valueforp}
Let $X\in\mathcal{T}$. 
Then, for any integer $j$, we have that
\begin{align*}
    \mathcal{T}/\mathcal{T}^{fd}(X,\Sigma^j M)\cong \mathcal{T}(X^{\leq -j+n-2}, \Sigma^j M).
\end{align*}
\end{corollary}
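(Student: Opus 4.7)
The plan is to revisit the proof of Theorem \ref{thm_directstab}(a) with the concrete choice $Y = \Sigma^j M$, identify exactly where the direct system stabilizes in terms of $j$ and $n$, and then read off the isomorphism. The essential input will be Lemma \ref{lemma_Cp}, condition (CY2), and the characterization
\begin{align*}
    Z\in\mathcal{T}^{\leq m} \iff \mathcal{T}(M,\Sigma^i Z)=0 \text{ for all } i>m,
\end{align*}
which follows directly from unwinding $\mathcal{T}^{\leq m}=\Sigma^{-m}(\Sigma^{<0}\mathcal{M})^{\perp_\mathcal{T}}$.

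Applying $\mathcal{T}(-,\Sigma^j M)$ to the triangle $X^{\leq -p}\xrightarrow{\xi^{-p}} X^{\leq -p+1}\to C_X^{-p}\to \Sigma X^{\leq -p}$ from Notation \ref{notation_Cpx} yields the exact sequence
\begin{align*}
    \mathcal{T}(C_X^{-p},\Sigma^j M)\to \mathcal{T}(X^{\leq -p+1},\Sigma^j M)\to \mathcal{T}(X^{\leq -p},\Sigma^j M)\to \mathcal{T}(\Sigma^{-1}C_X^{-p},\Sigma^j M).
\end{align*}
Since Lemma \ref{lemma_Cp} gives $C_X^{-p}\in\mathcal{T}^{fd}$, condition (CY2) rewrites the two outer terms as
\begin{align*}
    D\mathcal{T}(M,\Sigma^{n-j}C_X^{-p}) \text{ and } D\mathcal{T}(M,\Sigma^{n-1-j}C_X^{-p}).
\end{align*}

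Lemma \ref{lemma_Cp} also says $C_X^{-p}\in\mathcal{T}^{\leq -p+1}$, so by the characterization above both groups vanish provided $n-1-j>-p+1$, i.e. $p\geq j-n+3$. For such $p$, the map $(\xi^{-p})^*\colon \mathcal{T}(X^{\leq -p+1},\Sigma^j M)\to\mathcal{T}(X^{\leq -p},\Sigma^j M)$ is an isomorphism. The first value of $p$ for which this applies is $p=j-n+3$, so the direct system has already stabilized at the term indexed by $-p+1=-j+n-2$.

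Combining this with Theorem \ref{thm_directstab}(a), which identifies the stabilized value of the direct system with $\mathcal{T}/\mathcal{T}^{fd}(X,\Sigma^j M)$, gives
\begin{align*}
    \mathcal{T}/\mathcal{T}^{fd}(X,\Sigma^j M)\cong \underset{q}{\varinjlim}\,\mathcal{T}(X^{\leq -q},\Sigma^j M)\cong \mathcal{T}(X^{\leq -j+n-2},\Sigma^j M),
\end{align*}
which is exactly the claim. There is no real obstacle; the only slightly delicate point is keeping track of the two inequalities $n-j>-p+1$ and $n-1-j>-p+1$ and making sure the sharper one yields the stated index $-j+n-2$ rather than $-j+n-1$.
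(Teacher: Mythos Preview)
Your proof is correct and follows essentially the same route as the paper's: both apply $\mathcal{T}(-,\Sigma^j M)$ to the triangle defining $C_X^{-p}$, use (CY2) together with $C_X^{-p}\in\mathcal{T}^{\leq -p+1}$ to kill the outer terms once $p\geq j-n+3$, and then read off the stabilized value from Theorem~\ref{thm_directstab}(a). The only cosmetic difference is that the paper writes the outer terms as $D\mathcal{T}(\Sigma^{j-n}M,C_X^{-l})$ and $D\mathcal{T}(\Sigma^{j-n+1}M,C_X^{-l})$ rather than shifting $C_X^{-p}$.
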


\begin{proof}
By Theorem \ref{thm_directstab}(a), for $p\gg0$, we have that $\mathcal{T}/\mathcal{T}^{fd}(X,\Sigma^j M)\cong \mathcal{T}(X^{\leq -p}, \Sigma^j M)$. We show that $p=j-n+2$ is big enough. For any integer $l$, consider the triangle 
\begin{align*}
\xymatrix@C=3em{
    X^{\leq -l}\ar[r]& X^{\leq -l+1}\ar[r]& C^{-l}_X\ar[r]& \Sigma X^{\leq -l},
    }
\end{align*}
where $C^{-l}_X\in\mathcal{T}^{-l+1}$. Applying $\mathcal{T}(-, \Sigma^j M)$ to this triangle, we obtain the exact sequence
\begin{align*}
    \xymatrix{
    \mathcal{T}(C^{-l}_X, \Sigma^j M)\ar[r]&\mathcal{T}(X^{\leq -l+1},\Sigma^j M)\ar[r]^\alpha& \mathcal{T}(X^{\leq -l}, \Sigma^j M)\ar[r]& \mathcal{T}(\Sigma^{-1} C^{-l}_X, \Sigma^j M).
    }
\end{align*}
Since $C^{-l}_X\in\mathcal{T}^{fd}$, we have that
\begin{align*}
\mathcal{T}(C^{-l}_X, \Sigma^j M)\cong D\mathcal{T}(\Sigma^{j-n}M, C^{-l}_X) \text{ and }\mathcal{T}(\Sigma^{-1} C^{-l}_X, \Sigma^j M)\cong D\mathcal{T}( \Sigma^{j-n+1} M, C^{-l}_X).
\end{align*}
Moreover, since $C^{-l}_X\in\mathcal{T}^{-l+1}$, we have that $\mathcal{T}(\Sigma^m M,C^{-l}_X)=0$ for $m<l-1$. Hence, if $l>j-n+2$, the morphism $\alpha$ is an isomorphism.
\end{proof}

Moreover, if we also fix $X=M$, then Corollary \ref{coro_valueforp} has the following important special case. 
\begin{corollary}\label{coro_MandSigma_zero}
We have that $\mathcal{T}/\mathcal{T}^{fd}(M,\Sigma^j M)=0$ for $j=1,\dots, n-2$.
\end{corollary}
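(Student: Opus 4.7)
The plan is to deduce this immediately from Corollary \ref{coro_valueforp} applied to $X = M$, after checking that the relevant truncation $M^{\leq -j+n-2}$ is just $M$ itself.

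First I would apply Corollary \ref{coro_valueforp} with $X=M$ to get
\begin{align*}
\mathcal{T}/\mathcal{T}^{fd}(M,\Sigma^j M) \cong \mathcal{T}(M^{\leq -j+n-2}, \Sigma^j M).
\end{align*}
For $j \in \{1,\dots,n-2\}$, the integer $m := -j+n-2$ lies in $\{0,1,\dots,n-3\}$, so in particular $m \geq 0$.

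Next I would verify that $M \in \mathcal{T}^{\leq 0}$. Unpacking the definition, $\mathcal{T}^{\leq 0} = (\Sigma^{<0}\mathcal{M})^{\perp_\mathcal{T}}$, so this amounts to showing $\Hom_\mathcal{T}(\Sigma^i M', M) = 0$ for all $M' \in \mathcal{M}$ and $i < 0$; equivalently, $\Hom_\mathcal{T}(M', \Sigma^{-i}M) = 0$ for $-i > 0$, which is exactly the presilting condition on $\mathcal{M}$ and hence follows from (CY3). Since for $m \geq 0$ one has $\mathcal{T}^{\leq 0} \subseteq \mathcal{T}^{\leq m}$ (the defining $\perp$-condition becomes weaker as $m$ grows), we deduce $M \in \mathcal{T}^{\leq m}$. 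By the uniqueness (up to isomorphism) of the truncation triangle associated to $M$ with respect to $(\mathcal{T}^{\leq m}, \mathcal{T}^{\geq m+1})$, the triangle $M \xrightarrow{\mathrm{id}} M \to 0 \to \Sigma M$ is this truncation, so $M^{\leq m} \cong M$.

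Substituting back, I would obtain
\begin{align*}
\mathcal{T}/\mathcal{T}^{fd}(M,\Sigma^j M) \cong \mathcal{T}(M, \Sigma^j M),
\end{align*}
and the right-hand side vanishes for every $j \geq 1$ directly from the presilting property of $M$ in (CY3). In particular it vanishes for $j = 1,\dots,n-2$, which finishes the argument. There is essentially no obstacle here; the only point that requires a moment's care is keeping track of the sign conventions in $\mathcal{T}^{\leq m} = \Sigma^{-m}\mathcal{T}^{\leq 0} = (\Sigma^{<-m}\mathcal{M})^{\perp_\mathcal{T}}$ to confirm that $\mathcal{T}^{\leq 0}\subseteq \mathcal{T}^{\leq m}$ for $m\geq 0$, so that the truncation of $M$ at level $m = -j+n-2 \geq 0$ really is $M$ itself.
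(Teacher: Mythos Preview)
Your proposal is correct and follows exactly the same approach as the paper: apply Corollary~\ref{coro_valueforp} with $X=M$, identify $M^{\leq -j+n-2}\cong M$ for $-j+n-2\geq 0$, and then use that $M$ is silting. The only cosmetic difference is that the paper isolates the step $M\cong M^{\leq i}$ for $i\geq 0$ as a separate lemma (Lemma~\ref{lemma_M_iso_M_leq}), whereas you prove it inline.
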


In order to prove the above result, we first prove the following.
\begin{lemma}\label{lemma_M_iso_M_leq}
Let $i\geq 0$ be an integer. Then $M\cong_{\mathcal{T}}M^{\leq i}$.
\end{lemma}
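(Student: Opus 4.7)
The plan is to show directly that $M$ itself lies in $\mathcal{T}^{\leq i}$ for every $i \geq 0$, and then invoke the uniqueness of truncation triangles to conclude. This is essentially a presilting computation combined with the definition of the $t$-structure.

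First, I would unpack the definition of $\mathcal{T}^{\leq i} = (\Sigma^{<-i}\mathcal{M})^{\perp_{\mathcal{T}}}$. An object $X$ lies in $\mathcal{T}^{\leq i}$ if and only if $\mathcal{T}(\Sigma^j \mathcal{M}, X) = 0$ for all $j < -i$, equivalently $\mathcal{T}(\mathcal{M}, \Sigma^k X) = 0$ for all $k > i$. Taking $X = M$, this condition becomes $\mathcal{T}(M, \Sigma^k M) = 0$ for all $k > i$. For $i \geq 0$ this is a consequence of $\mathcal{M} = \add(M)$ being silting (in particular presilting), which gives the vanishing already for all $k > 0$. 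Hence $M \in \mathcal{T}^{\leq i}$ whenever $i \geq 0$.

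Next, I would appeal to the uniqueness of the truncation triangle from Definition \ref{defn_trunc_triangle}. Since $M \in \mathcal{T}^{\leq i}$ and $0 \in \mathcal{T}^{\geq i+1}$, the trivial triangle
\begin{align*}
M \xrightarrow{\,\mathrm{id}\,} M \to 0 \to \Sigma M
\end{align*}
is a truncation triangle for $M$ with respect to the $t$-structure $(\mathcal{T}^{\leq i}, \mathcal{T}^{\geq i})$. By the uniqueness statement, it is isomorphic to the canonical truncation triangle $M^{\leq i} \xrightarrow{f^i} M \xrightarrow{g^{i+1}} M^{\geq i+1} \to \Sigma M^{\leq i}$, so $f^i : M^{\leq i} \to M$ is an isomorphism in $\mathcal{T}$ (and $M^{\geq i+1} \cong 0$).

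There is no real obstacle here; the only thing to be careful about is matching the asymmetric indexing conventions of the $t$-structure (the ``$<-i$'' inside the perpendicular) against the presilting vanishing, so that the hypothesis $i \geq 0$ is used exactly where needed.
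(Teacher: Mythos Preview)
Your proof is correct and follows essentially the same approach as the paper's: you show $M\in\mathcal{T}^{\leq i}$ from the (pre)silting vanishing, exhibit the trivial triangle $M\xrightarrow{1_M}M\to 0\to\Sigma M$ as a truncation triangle, and invoke uniqueness. The only cosmetic difference is that the paper phrases the membership as $M\in\mathcal{T}^{\leq 0}\subseteq\mathcal{T}^{\leq i}$ rather than unpacking the perpendicular explicitly.
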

\begin{proof}
Consider the truncation triangle associated to $M$ with respect to the $t$-structure $(\mathcal{T}^{\leq i}, \mathcal{T}^{\geq i})$, that is
\begin{align*}
    \xymatrix@C=3em{
    M^{\leq i}\ar[r]& M\ar[r]&M^{\geq i+1}\ar[r]& \Sigma M^{\leq i},
    }
\end{align*}
and recall that it is unique up to unique isomorphism by Definition \ref{defn_trunc_triangle}. Since $M$ is a silting object and $i\geq 0$, we have that
\begin{align*}
    M\in (\Sigma^{<0}\mathcal{M})^{\perp_{\mathcal{T}}}=\mathcal{T}^{\leq 0}\subseteq \mathcal{T}^{\leq i}.
\end{align*}
Hence, the triangle
\begin{align*}
   \xymatrix@C=3em{
    M\ar[r]^{1_M}& M\ar[r]&0\ar[r]& \Sigma M
    }
\end{align*}
is also such that $M\in\mathcal{T}^{\leq i}$ and $0\in\mathcal{T}^{\geq i+1}$. By uniqueness of truncation triangles, we then have that $M\cong M^{\leq i}$. 
\end{proof}

\begin{proof}[Proof of Corollary \ref{coro_MandSigma_zero}]
We have that
\begin{align*}
    \mathcal{T}/\mathcal{T}^{fd}(M,\Sigma^j M)\cong \mathcal{T}(M^{\leq -j+n-2}, \Sigma^j M)\cong \mathcal{T}(M,\Sigma^j M)=0,
\end{align*}
where the first isomorphism holds by Corollary \ref{coro_valueforp} with $X=M$, the second by Lemma \ref{lemma_M_iso_M_leq} and since $-j+n-2\geq -n+2+n-2=0$, and the last equality holds because $M$ is a silting object and $j>0$.
\end{proof}

\section{The Gap Theorem}\label{section_thmB}
In this section, we introduce the ``Gap Theorem". This was inspired by \cite[(2.5) Gap Theorem]{FJ} which, however, relies on a different setup. In the theorem, the vanishing condition for $\mathcal{T}(M,\Sigma^j X)$ can be viewed as a gap in the cohomology of $X$.
\begin{theorem}\label{thm_gapinX}
Let $a$ be an integer and $X\in\mathcal{T}$ be such that $\mathcal{T}(M,\Sigma^j X)=0$ for $a\leq j\leq a+n-2$. Then, the truncation triangle
\begin{align}\label{trunc_tr_a}
    \xymatrix@C=3em{
    X^{\leq a-1}\ar[r]& X\ar[r]&X^{\geq a}\ar[r]^-\delta& \Sigma X^{\leq a-1}
    }
\end{align}
splits and $X\cong X^{\leq a-1}\oplus X^{\geq a}$.
\end{theorem}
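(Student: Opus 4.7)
The plan is to show that the connecting morphism $\delta\colon X^{\geq a}\to \Sigma X^{\leq a-1}$ is zero, which forces the triangle to split. So I want to prove $\mathcal{T}(X^{\geq a},\Sigma X^{\leq a-1})=0$. The first move will be to apply relative Calabi--Yau duality: by (CY3), $X^{\geq a}\in\mathcal{T}^{\geq a}\subset\mathcal{T}^{fd}$, so (CY2) gives
\begin{align*}
    \mathcal{T}(X^{\geq a},\Sigma X^{\leq a-1})\cong D\,\mathcal{T}(\Sigma X^{\leq a-1},\Sigma^n X^{\geq a})\cong D\,\mathcal{T}(X^{\leq a-1},\Sigma^{n-1}X^{\geq a}),
\end{align*}
so it suffices to show $\mathcal{T}(X^{\leq a-1},\Sigma^{n-1}X^{\geq a})=0$.

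The key step will be to upgrade the position of $X^{\geq a}$ in the $t$-structure, showing that the hypothesis forces $X^{\geq a}\in\mathcal{T}^{\geq a+n-1}$. For this, I would apply $\mathcal{T}(M,\Sigma^j-)$ to the truncation triangle (\ref{trunc_tr_a}) to obtain the long exact sequence
\begin{align*}
    \mathcal{T}(M,\Sigma^j X^{\leq a-1})\to \mathcal{T}(M,\Sigma^j X)\to \mathcal{T}(M,\Sigma^j X^{\geq a})\to \mathcal{T}(M,\Sigma^{j+1}X^{\leq a-1}).
\end{align*}
From $X^{\leq a-1}\in\mathcal{T}^{\leq a-1}=(\Sigma^{<-(a-1)}\mathcal{M})^{\perp_\mathcal{T}}$ the two outer terms vanish for all $j\geq a$, so $\mathcal{T}(M,\Sigma^j X)\cong \mathcal{T}(M,\Sigma^j X^{\geq a})$ for $j\geq a$. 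Feeding in the hypothesis gives $\mathcal{T}(M,\Sigma^j X^{\geq a})=0$ for $a\leq j\leq a+n-2$, and combining with $X^{\geq a}\in \mathcal{T}^{\geq a}$ (which already provides the vanishing for $j<a$) yields $\mathcal{T}(M,\Sigma^j X^{\geq a})=0$ for all $j<a+n-1$, i.e.\ $X^{\geq a}\in \mathcal{T}^{\geq a+n-1}$.

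Finally, shifting gives $\Sigma^{n-1}X^{\geq a}\in \Sigma^{n-1}\mathcal{T}^{\geq a+n-1}=\mathcal{T}^{\geq a}$. Since $(\mathcal{T}^{\leq a-1},\mathcal{T}^{\geq a})$ is a $t$-structure, the orthogonality axiom gives $\mathcal{T}(X^{\leq a-1},\Sigma^{n-1}X^{\geq a})=0$, which by the duality of the first paragraph forces $\mathcal{T}(X^{\geq a},\Sigma X^{\leq a-1})=0$ and hence $\delta=0$. The only delicate point is the numerology: the hypothesis provides a vanishing window of length exactly $n-1$, which is precisely what is needed so that the $n$-CY shift is absorbed by pushing $X^{\geq a}$ into $\mathcal{T}^{\geq a+n-1}$ and then shifting back by $n-1$ to land in the orthogonal complement of $\mathcal{T}^{\leq a-1}$.
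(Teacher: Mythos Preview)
Your proof is correct and follows essentially the same route as the paper: first upgrade $X^{\geq a}$ from $\mathcal{T}^{\geq a}$ to $\mathcal{T}^{\geq a+n-1}$ via the long exact sequence of the truncation triangle and the hypothesis, then use relative $n$-Calabi--Yau duality together with the $t$-structure orthogonality to kill $\delta$. The only cosmetic difference is that the paper applies the duality with the extra $\Sigma$ left in place (working with $\Sigma X^{\leq a-1}\in\mathcal{T}^{\leq a-2}$ and $\Sigma^n X^{\geq a}\in\mathcal{T}^{\geq a-1}$), whereas you cancel a shift first.
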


\begin{proof}
We first show that $X^{\geq a}\in\mathcal{T}^{\geq a+n-1}$. Since we already know that $X^{\geq a}\in\mathcal{T}^{\geq a}$, it is enough to prove that  $\mathcal{T}(\Sigma^{-j}M,X^{\geq a})=0$ for $a\leq j\leq a+n-2$. Consider the exact sequence obtained by applying $\mathcal{T}(\Sigma^{-j} M,-)$ to (\ref{trunc_tr_a}):
\begin{align*}
     \xymatrix{
    \mathcal{T}(\Sigma^{-j} M, X)\ar[r]&\mathcal{T}(\Sigma^{-j} M, X^{\geq a} )\ar[r]& \mathcal{T}(\Sigma^{-j} M,\Sigma X^{\leq a-1}).
    }
\end{align*}
Note that, for $a\leq j\leq a+n-2$, we have
\begin{align*}
    \mathcal{T}(\Sigma^{-j} M, X)\cong \mathcal{T}( M,\Sigma^j X)=0
\end{align*}
by assumption. Moreover,
\begin{align*}
    \mathcal{T}(\Sigma^{-j} M,\Sigma X^{\leq a-1})\cong \mathcal{T}(\Sigma^{-j-1} M, X^{\leq a-1})=0,
\end{align*}
since $-a-n+1\leq -j-1\leq -a-1$ and $X^{\leq a-1}\in\mathcal{T}^{\leq a-1}$.
Hence $\mathcal{T}(\Sigma^{-j} M, X^{\geq a} )=0$ for $a\leq j\leq a+n-2$ whence $X^{\geq a}\in\mathcal{T}^{\geq a+n-1}$.
Since $X^{\geq a}\in\mathcal{T}^{fd}$, we have that
\begin{align*}
    \delta\in\mathcal{T}(X^{\geq a}, \Sigma X^{\leq a-1})\cong D\mathcal{T}(\Sigma X^{\leq a-1}, \Sigma^n X^{\geq a})=0,
\end{align*}
since $\Sigma X^{\leq a-1}\in \Sigma\mathcal{T}^{\leq a-1}=\mathcal{T}^{\leq a-2}$ and $\Sigma^n X^{\geq a}\in\Sigma^n \mathcal{T}^{\geq a+n-1}=\mathcal{T}^{\geq a-1}=(\mathcal{T}^{\leq a-2})^{\perp_\mathcal{T}}$.
Hence (\ref{trunc_tr_a}) splits and $X\cong X^{\leq a-1}\oplus X^{\geq a}$.
\end{proof}

\begin{remark}
Since we are working in the case $\mathcal{M}=\text{add} (M)$, by \cite[Section 5.1]{IYa}, we have that condition (CY3) from Definition \ref{defn_nCYtriple} is equivalent to its dual:

(CY3$^{\text{op}}$) The subcategory $\mathcal{M}$ is a silting subcategory of $\mathcal{T}$ and admits a left adjacent $t$-structure $(\widetilde{\mathcal{T}}^{\leq 0},\widetilde{\mathcal{T}}^{\geq 0}):=({}^{\perp_{\mathcal{T}}}(\Sigma^{<0}\mathcal{M}),{}^{\perp_{\mathcal{T}}}(\Sigma^{>0}\mathcal{M}))$ with $\widetilde{\mathcal{T}}^{\leq 0}\subseteq \mathcal{T}^{fd}$.

Moreover, note that for an integer $m$, the pair $(\widetilde{\mathcal{T}}^{\leq m},\widetilde{\mathcal{T}}^{\geq m}):=({}^{\perp_{\mathcal{T}}}(\Sigma^{<-m}\mathcal{M}),{}^{\perp_{\mathcal{T}}}(\Sigma^{>-m}\mathcal{M}))$ is also a $t$-structure with $\widetilde{\mathcal{T}}^{\leq m}\subseteq \mathcal{T}^{fd}$.
\end{remark}

The following can be proven by a similar argument to the one proving Theorem \ref{thm_gapinX}.
\begin{theorem}\label{thm_gapinXdual}
Let $a$ be an integer and $X\in\mathcal{T}$ be such that $\mathcal{T}(X,\Sigma^j M)=0$ for $a\leq j\leq a+n-2$. Then, the truncation triangle
\begin{align*}
    \xymatrix@C=3em{
    \widetilde{X}^{\leq -a}\ar[r]& X\ar[r]&\widetilde{X}^{\geq -a+1}\ar[r]& \Sigma \widetilde{X}^{\leq -a}
    }
\end{align*}
splits and $X\cong \widetilde{X}^{\leq -a}\oplus \widetilde{X}^{\geq -a+1}$.
\end{theorem}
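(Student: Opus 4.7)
The plan is to mirror the argument of Theorem \ref{thm_gapinX}, exchanging the roles of the right- and left-adjacent $t$-structures. Since the triangle splits if and only if the connecting morphism $\delta:\widetilde{X}^{\geq -a+1}\to \Sigma\widetilde{X}^{\leq -a}$ vanishes, the whole problem reduces to checking $\delta=0$. I would carry this out in two steps.

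First, I would sharpen the position of $\widetilde{X}^{\leq -a}$ in the $t$-structure, showing $\widetilde{X}^{\leq -a}\in\widetilde{\mathcal{T}}^{\leq -a-n+1}$, i.e.\ that $\mathcal{T}(\widetilde{X}^{\leq -a},\Sigma^j M)=0$ for every $j<a+n-1$. The range $j<a$ is automatic from $\widetilde{X}^{\leq -a}\in\widetilde{\mathcal{T}}^{\leq -a}$. For $a\leq j\leq a+n-2$, applying $\mathcal{T}(-,\Sigma^j M)$ to the truncation triangle yields the exact sequence
\begin{align*}
\mathcal{T}(X,\Sigma^j M)\rightarrow \mathcal{T}(\widetilde{X}^{\leq -a},\Sigma^j M)\rightarrow \mathcal{T}(\widetilde{X}^{\geq -a+1},\Sigma^{j+1}M).
\end{align*}
The left term vanishes by hypothesis, while the right term vanishes because $\widetilde{X}^{\geq -a+1}\in\widetilde{\mathcal{T}}^{\geq -a+1}={}^{\perp_\mathcal{T}}(\Sigma^{>a-1}\mathcal{M})$ and $j+1\geq a+1>a-1$. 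Hence the middle term must also vanish.

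Second, I would invoke (CY2) to show $\delta=0$. Since $\widetilde{\mathcal{T}}^{\leq m}\subseteq\mathcal{T}^{fd}$ for every integer $m$, the object $\Sigma^{1-n}\widetilde{X}^{\leq -a}$ lies in $\mathcal{T}^{fd}$, so (CY2) together with Hom-finiteness gives
\begin{align*}
\mathcal{T}(\widetilde{X}^{\geq -a+1},\Sigma\widetilde{X}^{\leq -a})\cong D\mathcal{T}(\Sigma^{1-n}\widetilde{X}^{\leq -a},\widetilde{X}^{\geq -a+1}).
\end{align*}
By the first step, $\Sigma^{1-n}\widetilde{X}^{\leq -a}\in\Sigma^{1-n}\widetilde{\mathcal{T}}^{\leq -a-n+1}=\widetilde{\mathcal{T}}^{\leq -a}$, while $\widetilde{X}^{\geq -a+1}\in\widetilde{\mathcal{T}}^{\geq -a+1}$, so the torsion pair $(\widetilde{\mathcal{T}}^{\leq -a},\widetilde{\mathcal{T}}^{\geq -a+1})$ forces the right-hand Hom space to be zero. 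Thus $\delta=0$ and the triangle splits.

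The main obstacle I expect is not conceptual but bookkeeping: one must carefully track how the left-adjacent $t$-structure $(\widetilde{\mathcal{T}}^{\leq m},\widetilde{\mathcal{T}}^{\geq m})$ behaves under the suspension $\Sigma$, in particular verifying the shift identity $\Sigma^{1-n}\widetilde{\mathcal{T}}^{\leq -a-n+1}=\widetilde{\mathcal{T}}^{\leq -a}$ used in the torsion-pair step, and remembering that $\widetilde{\mathcal{T}}^{\geq -a+1}$ is defined by vanishing of Homs with $M$-shifts in the \emph{second} argument. With that lined up, everything else is the same shift-chasing used in the proof of Theorem \ref{thm_gapinX}.
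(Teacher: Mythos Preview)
Your proposal is correct and is precisely the dualization the paper has in mind: the paper does not write out the proof of Theorem~\ref{thm_gapinXdual} but simply says it ``can be proven by a similar argument to the one proving Theorem~\ref{thm_gapinX},'' and your two steps (sharpening $\widetilde{X}^{\leq -a}$ into $\widetilde{\mathcal{T}}^{\leq -a-n+1}$ via the long exact sequence, then applying (CY2) and the torsion pair $(\widetilde{\mathcal{T}}^{\leq -a},\widetilde{\mathcal{T}}^{\geq -a+1})$ to kill $\delta$) are exactly the dual of the two steps in that proof. The index bookkeeping, including the shift identity $\Sigma^{1-n}\widetilde{\mathcal{T}}^{\leq -a-n+1}=\widetilde{\mathcal{T}}^{\leq -a}$, checks out.
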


We conclude this section with three lemmas setting us up for an application of Theorems \ref{thm_gapinX} and \ref{thm_gapinXdual}, see Corollary \ref{coro_n-1ct}. We first recall the definitions of precovers, also known as right approximations, and preenvelopes, also known as left approximations.
\begin{defn}\label{defn_cover_ffinite}
Let $\mathcal{T}$ be a triangulated category and $\mathcal{A}\subseteq\mathcal{T}$ be a full subcategory. An $\mathcal{A}$\textit{-precover} of $X\in\mathcal{T}$ is a morphism of the form $\alpha :A\rightarrow X$ with $A\in \mathcal{A}$ such that every morphism $\alpha':A'\rightarrow X$ with $A'\in \mathcal{A}$ factorizes as:
\begin{align*}
\xymatrix{
A'\ar[rr]^{\alpha'} \ar@{-->}[dr]_{\exists}& & X.\\
& A \ar[ru]_{\alpha} &
}
\end{align*}
The dual notion of precovers is \textit{preenvelopes}.

The subcategory $\mathcal{A}$ of $\mathcal{T}$ is called \textit{precovering} if every object in $\mathcal{T}$ has an $\mathcal{A}$-precover. Dually, it is called \textit{preenveloping} if every object in $\mathcal{T}$ has an $\mathcal{A}$-preenvelope. If $\mathcal{A}$ is both precovering and preenveloping, it is called \textit{functorially finite} in $\mathcal{T}$.
\end{defn}

The following is a consequence of \cite[Lemma 5.3]{SZ}, but we provide a proof for the benefit of the reader.
\begin{lemma}\label{lemma_a_star_b_ff}
Let $\mathcal{X}$ be a $k$-linear, Hom-finite, Krull-Schmidt, triangulated category and $\mathcal{A},\, \mathcal{B}\subset \mathcal{X}$ be full subcategories.
\begin{enumerate}[label=(\alph*)]
\item If $\mathcal{A},\, \mathcal{B}$ are precovering in $\mathcal{X}$,
then  $\mathcal{A}*\mathcal{B}$ is precovering in $\mathcal{X}$.
\item If $\mathcal{A},\, \mathcal{B}$ are preenveloping in $\mathcal{X}$,
then  $\mathcal{A}*\mathcal{B}$ is preenveloping in $\mathcal{X}$.
\end{enumerate}
\end{lemma}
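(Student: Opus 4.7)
My plan for part (a) is to construct an $\mathcal{A}*\mathcal{B}$-precover of a given object as a kind of homotopy pullback of the two individual approximations; part (b) is the formal dual, via a homotopy pushout. Given $X\in\mathcal{X}$, I first take an $\mathcal{A}$-precover $\alpha\colon A\to X$ and complete it to a triangle $A\xrightarrow{\alpha}X\xrightarrow{x}C\xrightarrow{c}\Sigma A$; then a $\mathcal{B}$-precover $\beta\colon B\to C$ of $C$. Completing the composite $c\beta\colon B\to\Sigma A$ to a triangle
\[
A\xrightarrow{a}P\xrightarrow{b}B\xrightarrow{c\beta}\Sigma A
\]
produces $P\in\mathcal{A}*\mathcal{B}$ by definition, and axiom (TR3) applied to the commuting square with vertical morphisms $\beta\colon B\to C$ and $\mathrm{id}_{\Sigma A}$ yields a morphism $\pi\colon P\to X$ making the above a morphism of triangles; in particular $\pi a=\alpha$ and $x\pi=\beta b$. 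My claim is that $\pi\colon P\to X$ is the desired precover.

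To verify the factorisation property, let $P'\in\mathcal{A}*\mathcal{B}$ sit in a triangle $A'\xrightarrow{a'}P'\xrightarrow{b'}B'\to\Sigma A'$ together with a morphism $\pi'\colon P'\to X$. The precover property of $\alpha$ yields $\tilde a\colon A'\to A$ with $\alpha\tilde a=\pi'a'$; then $x\pi'a'=x\alpha\tilde a=0$, so $x\pi'$ factors through $b'$ as $\tilde b\circ b'$ for some $\tilde b\colon B'\to C$, and the precover property of $\beta$ yields $\hat b\colon B'\to B$ with $\beta\hat b=\tilde b$. Since $(c\beta)\hat b b'=cx\pi'=0$, applying $\mathcal{X}(P',-)$ to the triangle defining $P$ lifts $\hat b b'$ to a morphism $\sigma\colon P'\to P$ with $b\sigma=\hat b b'$. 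Computing $x(\pi\sigma-\pi')=\beta b\sigma-x\pi'=\beta\hat b b'-\tilde b b'=0$ shows that $\pi\sigma-\pi'$ factors through $\alpha$ as $\alpha\rho$ for some $\rho\colon P'\to A$, and replacing $\sigma$ by $\sigma-a\rho$ (using $ba=0$ and $\pi a=\alpha$) yields a morphism $P'\to P$ lifting $\pi'$ through $\pi$.

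The step I expect to require most care is this final adjustment of $\sigma$: homotopy pullbacks in a triangulated category do not enjoy a strict universal property, so the lift of $\hat b b'$ through $b$ is unique only modulo $\mathrm{Im}\,a_*$, and one must use both $ba=0$ and $\pi a=\alpha$ (the content of the (TR3)-morphism of triangles above) to absorb the residual error $\alpha\rho$. Part (b) is then proved dually: start from a $\mathcal{B}$-preenvelope $\beta\colon X\to B$, complete to a triangle $F\xrightarrow{f}X\xrightarrow{\beta}B\xrightarrow{d}\Sigma F$, take an $\mathcal{A}$-preenvelope $\alpha\colon F\to A$, and complete $\Sigma\alpha\cdot d\colon B\to\Sigma A$ to a triangle $A\to P\to B\to\Sigma A$ exhibiting $P\in\mathcal{A}*\mathcal{B}$; (TR3) now produces the candidate preenvelope $X\to P$, and the verification mirrors the above. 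Neither the Hom-finite nor the Krull-Schmidt hypothesis plays any role in this existence argument; they are simply the standing context inherited from the ambient paper.
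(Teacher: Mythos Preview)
Your proposal is correct and follows essentially the same construction as the paper: both take an $\mathcal{A}$-precover $A\to X$, complete it to a triangle, take a $\mathcal{B}$-precover of the cone (equivalently, a $\Sigma^{-1}\mathcal{B}$-precover of the cocone), and build the candidate object in $\mathcal{A}*\mathcal{B}$ from these data. The only difference is presentational: the paper invokes the octahedral axiom to obtain simultaneously the triangle exhibiting $C\in\mathcal{A}*\mathcal{B}$ and a third triangle $\Sigma X_2\to C\to X\to\Sigma^2 X_2$, and then verifies the precover property by showing the composite $P'\to X\to\Sigma^2 X_2$ vanishes; you instead obtain the map $\pi$ via (TR3) and verify the precover property by first lifting $\hat b b'$ through $b$ and then correcting by $a\rho$. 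Your two-step lift-and-correct argument is slightly more hands-on but equally valid, and your closing remark that neither Hom-finiteness nor the Krull--Schmidt hypothesis is actually used is accurate and applies to the paper's proof as well.
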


\begin{proof}
We only prove (a), then (b) follows by a similar argument.
Assume $\mathcal{A},\, \mathcal{B}$ are precovering in $\mathcal{X}$ and let $X$ be an object in $\mathcal{X}$. Note that since $\mathcal{B}$ is precovering in $\mathcal{X}$, then so is $\Sigma^{-1}\mathcal{B}$. Then, there are two triangles in $\mathcal{X}$ of the form
\begin{align*}
     \xymatrix@C=3em @R=1em{
    X_1\ar[r]^-{x_1}& A\ar[r]^-{a}&X\ar[r]^-{x}& \Sigma X_1,\\
    X_2\ar[r]^-{x_2}& \Sigma^{-1}B\ar[r]^-{b}&X_1\ar[r]^-{x'}& \Sigma X_2,
    }
\end{align*}
where $a$ is an  $\mathcal{A}$-precover of $X$ and $b$ is a $\Sigma^{-1}\mathcal{B}$-precover of $X_1$. By the octahedral axiom, we have a commutative diagram of triangles in $\mathcal{X}$ of the form

\begin{align}\label{diagram_precover_c}
     \xymatrix@C=3em{
    X_2\ar[r]\ar[d]^{x^2}& 0\ar[r]\ar[d]&\Sigma X_2\ar@{=}[r]\ar[d]& \Sigma X_2\ar[d]^{\Sigma x^2}\\
    \Sigma^{-1} B\ar[r]^-{x_1\circ b}\ar[d]^{b}& A\ar[r]\ar@{=}[d]&C\ar[r]\ar[d]^c& B\ar[d]^{\Sigma b}\\
    X_1\ar[r]^-{x_1}\ar[d]^{x'}& A\ar[r]^-{a}\ar[d]&X\ar[r]^-{x}\ar[d]^y& \Sigma X_1\ar[d]^{\Sigma x'}\\
    \Sigma X_2\ar[r]& 0\ar[r]&\Sigma^2 X_2\ar@{=}[r]& \Sigma^2 X_2.
   }
\end{align}
We show that $c:C\rightarrow X$ is an $(\mathcal{A}*\mathcal{B})$-precover.
First, note that $C\in \mathcal{A}*\mathcal{B}$ since the second row in (\ref{diagram_precover_c}) is a triangle with  $A\in\mathcal{A}$ and $B\in\mathcal{B}$. Let $c': C'\rightarrow X$ be a morphism with $C'\in \mathcal{A}*\mathcal{B}$. Then $C'$ appears in a triangle in $\mathcal{X}$ of the form
\begin{align*}
    \xymatrix@C=3em{
    \Sigma^{-1} B'\ar[r]& A'\ar[r]^-{\overline{a}}&C'\ar[r]^-{\overline{c}}& B',
    }
\end{align*}
where $A'\in\mathcal{A}$ and $B'\in \mathcal{B}$.
Since $a:A\rightarrow X$ is an $\mathcal{A}$-precover, there exists a morphism of the form $a':A'\rightarrow A$ such that $a\circ a'=c'\circ \overline{a}$. Using this and $\Sigma x'\circ x=y$ by (\ref{diagram_precover_c}), there are morphisms of triangles of the form
\begin{align}\label{diagram_a_star_b}
     \xymatrix@C=3em{
    \Sigma^{-1} B'\ar[r]\ar@{-->}[d]^{h}& A'\ar[r]^{\overline{a}}\ar[d]^{a'}&C'\ar[r]^{\overline{c}}\ar[d]^{c'}& B'\ar@{-->}[d]^{\Sigma h}\\
    X_1\ar[r]^-{x_1}\ar[d]^{x'}& A\ar[r]^-{a}\ar@{-->}[d]&X\ar[r]^-{x}\ar@{=}[d]& \Sigma X_1\ar[d]^{\Sigma x'}\\
    \Sigma X_2\ar[r]& C\ar[r]^c&X\ar[r]^y& \Sigma^2 X_2.
   }
\end{align}
Since $b:\Sigma^{-1}B\rightarrow X_1$ is a $\Sigma^{-1}\mathcal{B}$-precover and $\Sigma^{-1}B'\in\Sigma^{-1}\mathcal{B}$, there is a morphism $\overline{b}:\Sigma^{-1}B'\rightarrow\Sigma^{-1}B$ such that $b\circ \overline{b}=h$.
Hence
\begin{align*}
    x'\circ h=x'\circ b\circ \overline{b}=0\circ \overline{b}=0,
\end{align*}
where $x'\circ b=0$ as two consecutive morphisms in a triangle compose to zero.
By commutativity of (\ref{diagram_a_star_b}), we then have
\begin{align*}
    0=\Sigma (x'\circ h)\circ \overline{c}=\Sigma x'\circ \Sigma h\circ \overline{c}=y\circ c'.
\end{align*}
Hence there is a morphism $s:C'\rightarrow C$ such that $c\circ s=c'$.
So $c:C\rightarrow X$ is an $(\mathcal{A}*\mathcal{B})$-precover of $X$ and $\mathcal{A}*\mathcal{B}$ is precovering in $\mathcal{X}$.
\end{proof}

\begin{lemma}\label{lemma_W_torsionpair}
Let $p$ be an integer. Then the subcategory
\begin{align*}
    \mathcal{W}_p:=\Sigma^p\mathcal{M}*\Sigma^{p+1}\mathcal{M}*\cdots*\Sigma^{p+n-2}\mathcal{M}\subseteq \mathcal{T}
\end{align*}
is functorially finite and closed under extensions and so  $(\mathcal{W}_p,\mathcal{W}_p^{\perp_\mathcal{T}})$ and $(^{\perp_{\mathcal{T}}}\mathcal{W}_p,\mathcal{W}_p)$ are torsion pairs.
\end{lemma}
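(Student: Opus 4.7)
The plan is to verify in turn (1) that $\mathcal{W}_p$ is functorially finite, (2) that it is closed under extensions, and (3) the two torsion pair claims. Throughout I write $\mathcal{A}_i := \Sigma^{p+i}\mathcal{M}$ for $0 \leq i \leq n-2$, so $\mathcal{W}_p = \mathcal{A}_0 * \mathcal{A}_1 * \cdots * \mathcal{A}_{n-2}$.

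For (1), each $\mathcal{A}_i = \add(\Sigma^{p+i}M)$ is functorially finite in the Hom-finite Krull--Schmidt category $\mathcal{T}$: for $X\in\mathcal{T}$, a $k$-basis of $\Hom_{\mathcal{T}}(\Sigma^{p+i}M, X)$ yields a right $\mathcal{A}_i$-approximation $(\Sigma^{p+i}M)^{\oplus r} \to X$, and dually for preenvelopes. An iterated application of Lemma \ref{lemma_a_star_b_ff} then delivers functorial finiteness of $\mathcal{W}_p$. For (2), the presilting property $\Hom_{\mathcal{T}}(\mathcal{M}, \Sigma^{s}\mathcal{M}) = 0$ for $s \geq 1$ translates into $\Hom_{\mathcal{T}}(\mathcal{A}_j, \Sigma\mathcal{A}_i) = 0$ whenever $0 \leq j \leq i \leq n-2$, so any distinguished triangle $A \to X \to B \to \Sigma A$ with $A \in \mathcal{A}_i$, $B \in \mathcal{A}_j$ (for $i > j$) has zero connecting morphism and splits. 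This gives $\mathcal{A}_i * \mathcal{A}_j \subseteq \mathcal{A}_j * \mathcal{A}_i$ for $i > j$, while $\mathcal{A}_i * \mathcal{A}_i = \mathcal{A}_i$. Since $*$ is associative (by the octahedral axiom) and monotone in each argument, a bubble-sort rearranges the $2(n-1)$-fold product $\mathcal{W}_p * \mathcal{W}_p$ into $(\mathcal{A}_0 * \mathcal{A}_0) * \cdots * (\mathcal{A}_{n-2} * \mathcal{A}_{n-2})$, which then collapses to $\mathcal{W}_p$.

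For (3), I first produce, for each $X \in \mathcal{T}$, a triangle $W \to X \to Y \to \Sigma W$ with $W \in \mathcal{W}_p$ and $Y \in \mathcal{W}_p^{\perp_{\mathcal{T}}}$. Choose $w: W \to X$ to be a minimal right $\mathcal{W}_p$-approximation (it exists by Krull--Schmidt and (1)) and extend to a triangle with connecting morphism $\delta: Y \to \Sigma W$. Given $f: V \to Y$ with $V \in \mathcal{W}_p$, form the triangle $W \to P \to V \xrightarrow{\delta \circ f} \Sigma W$. By (2), $P \in \mathcal{W}_p * \mathcal{W}_p = \mathcal{W}_p$, and the axioms of triangulated categories provide $p: P \to X$ making this a morphism of triangles, in particular restricting to $w$ along $W \to P$. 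Since $w$ is a right $\mathcal{W}_p$-approximation, $p = w \circ s$ for some $s: P \to W$; the composite $W \to P \xrightarrow{s} W$ then post-composes with $w$ to $w$ itself, so by right-minimality of $w$ it is an automorphism, forcing $W \to P$ to be split monic. Hence the triangle for $P$ splits, $\delta \circ f = 0$, and the long exact sequence together with $w$ being a precover yield $f = 0$. This proves $\mathcal{T} = \mathcal{W}_p * \mathcal{W}_p^{\perp_{\mathcal{T}}}$; the identity ${}^{\perp_{\mathcal{T}}}(\mathcal{W}_p^{\perp_{\mathcal{T}}}) = \mathcal{W}_p$ reduces via this triangle to closure of $\mathcal{W}_p$ under direct summands, which I expect to be the main obstacle. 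I plan to settle it by identifying $\mathcal{W}_p = \mathcal{T}_{\leq -p} \cap \mathcal{T}_{\geq -p-n+2}$, the nontrivial inclusion being proved by iterated co-$t$-structure truncation using that the co-heart $\mathcal{T}_{\geq m} \cap \mathcal{T}_{\leq m}$ equals $\Sigma^{-m}\mathcal{M}$; both factors in the intersection are closed under summands as torsion classes of the co-$t$-structure from Remark \ref{remark_cot_structure}, hence so is $\mathcal{W}_p$. The dual torsion pair $({}^{\perp_{\mathcal{T}}}\mathcal{W}_p, \mathcal{W}_p)$ then follows by the analogous argument using minimal left $\mathcal{W}_p$-approximations.
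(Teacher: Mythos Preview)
Your treatment of (1) and (2) is essentially identical to the paper's: both use Hom-finiteness to get functorial finiteness of each $\Sigma^i\mathcal{M}$, iterate Lemma~\ref{lemma_a_star_b_ff}, and then exploit the presilting vanishing $\mathcal{T}(\mathcal{M},\Sigma^{>0}\mathcal{M})=0$ to force splittings that let one rearrange and collapse $\mathcal{W}_p*\mathcal{W}_p$ back to $\mathcal{W}_p$.

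For (3) you take a genuinely different route. The paper dispatches the torsion pair claims in one line by citing \cite[Proposition~2.3(1)]{IY} and its dual, which package exactly the passage from ``functorially finite and extension-closed'' to the two torsion pairs. You instead reprove the content of that proposition by hand: you build the decomposition $\mathcal{T}=\mathcal{W}_p*\mathcal{W}_p^{\perp_\mathcal{T}}$ from a minimal right $\mathcal{W}_p$-approximation, and you isolate the one nontrivial remaining point, namely closure of $\mathcal{W}_p$ under direct summands, and settle it via the identification $\mathcal{W}_p=\mathcal{T}_{\leq -p}\cap\mathcal{T}_{\geq -p-n+2}$ coming from the bounded co-$t$-structure of Remark~\ref{remark_cot_structure}. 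This is correct and more self-contained than the paper's approach; it also makes explicit the summand-closure hypothesis that the Iyama--Yoshino citation absorbs. The cost is length: what the paper does in a sentence takes you a full paragraph plus an auxiliary identification. If you keep your version, the co-$t$-structure argument for summand-closure is the cleanest available and is worth stating crisply; alternatively you could simply cite \cite[Proposition~2.3(1)]{IY} as the paper does and drop the direct construction.
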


\begin{proof}
Recall that $\mathcal{M}=\text{add}\,(M)$ by assumption, that is $\mathcal{M}$ has finitely many indecomposables. Then, $\mathcal{M}$ is both precovering and preenveloping in $\mathcal{T}$ and so is $\Sigma^i\mathcal{M}$ for any integer $i$. Then, using Lemma \ref{lemma_a_star_b_ff} repeatedly, for any integer $p$ we have that $\mathcal{W}_p$ is functorially finite.

Now let $i\geq j$ be integers and $X\in \Sigma^i\mathcal{M}*\Sigma^j\mathcal{M}$. Then, there is a triangle in $\mathcal{T}$ of the form
\begin{align*}
    \Sigma^i M'\rightarrow X\rightarrow\Sigma^j M''\xrightarrow{\delta} \Sigma^{i+1} M',
\end{align*}
with $M',\,M''\in\mathcal{M}$. Since $\mathcal{T}(\mathcal{M},\Sigma^{\geq 1} \mathcal{M})=0$ and $i\geq j$, we have that $\delta=0$ and so $X\cong \Sigma^i M'\oplus\Sigma^j M''$. Hence if $i\geq j$, then
\begin{align}\label{diagram_a_star_or_oplus_b}
    \Sigma^i\mathcal{M}*\Sigma^j\mathcal{M}=\Sigma^i\mathcal{M}\oplus\Sigma^j\mathcal{M}.
\end{align}
Since $n\geq 3$, we can use (\ref{diagram_a_star_or_oplus_b}) repeatedly to see that
\begin{align*}
    \mathcal{W}_p*\mathcal{W}_p=\Sigma^p\mathcal{M}*\Sigma^{p+1}\mathcal{M}*\cdots*\Sigma^{p+n-2}\mathcal{M}*\Sigma^p\mathcal{M}*\Sigma^{p+1}\mathcal{M}*\cdots*\Sigma^{p+n-2}\mathcal{M}=\mathcal{W}_p
\end{align*}
and so $\mathcal{W}_p$ is closed under extensions.
Then, the fact that $(\mathcal{W}_p,\mathcal{W}_p^{\perp_\mathcal{T}})$ and $(^{\perp_{\mathcal{T}}}\mathcal{W}_p,\mathcal{W}_p)$ are torsion pairs follows from \cite[Proposition 2.3(1)]{IY} and its dual.
\end{proof}

The following is a well-known fact which has been used in various arguments in previous papers, such as for example in \cite[proof of Theorem 5.8(c)]{IYa}.
\begin{lemma}\label{lemma_dct_eq_defn}
Let $d\geq 2$ be an integer. A  subcategory $\mathcal{C}=\text{add}(C)\subseteq \mathcal{T}$ is $d$-cluster tilting if and only if  $\mathcal{T}(C,\Sigma^i C)=0$ for $1\leq i\leq d-1$ and 
\begin{align*}
    \mathcal{T}\cong \mathcal{C}*\Sigma\mathcal{C}*\cdots *\Sigma^{d-1}\mathcal{C}.
\end{align*}
\end{lemma}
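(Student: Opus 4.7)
The plan is to invoke the standard characterization of $d$-cluster tilting: a subcategory $\mathcal{C} \subseteq \mathcal{T}$ is $d$-cluster tilting precisely when it is functorially finite in $\mathcal{T}$ and
\[
\mathcal{C} = \{X \in \mathcal{T} \mid \mathcal{T}(\mathcal{C}, \Sigma^i X) = 0 \text{ for } 1 \leq i \leq d-1\} = \{X \in \mathcal{T} \mid \mathcal{T}(X, \Sigma^i \mathcal{C}) = 0 \text{ for } 1 \leq i \leq d-1\}.
\]
Since $\mathcal{C} = \text{add}(C)$ has only finitely many indecomposables, functorial finiteness is automatic, so the task reduces to proving that these Hom-identifying conditions are equivalent to rigidity of $C$ together with the filtration $\mathcal{T} = \mathcal{C} * \Sigma\mathcal{C} * \cdots * \Sigma^{d-1}\mathcal{C}$.

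For the forward direction, rigidity is immediate from $C \in \mathcal{C}$ together with the characterization. To produce the filtration, I would iterate the precover construction: given $X_0 = X$, choose a $\mathcal{C}$-precover $C_0 \to X_0$ and form a triangle $X_1 \to C_0 \to X_0 \to \Sigma X_1$. Applying $\mathcal{T}(\mathcal{C}, -)$ and using that the precover gives surjectivity on $\mathcal{T}(\mathcal{C}, C_0) \to \mathcal{T}(\mathcal{C}, X_0)$, combined with the rigidity $\mathcal{T}(\mathcal{C}, \Sigma^i C_0) = 0$, lets one compute $\mathcal{T}(\mathcal{C}, \Sigma X_1) = 0$ and $\mathcal{T}(\mathcal{C}, \Sigma^i X_1) \cong \mathcal{T}(\mathcal{C}, \Sigma^{i-1} X_0)$ for $2 \leq i \leq d-1$. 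Iterating $d-1$ times gives $\mathcal{T}(\mathcal{C}, \Sigma^i X_{d-1}) = 0$ for $1 \leq i \leq d-1$, which by the cluster tilting characterization forces $X_{d-1} \in \Sigma^{d-1}\mathcal{C}$ (after a shift). Reassembling the triangles then places $X$ in $\mathcal{C} * \Sigma\mathcal{C} * \cdots * \Sigma^{d-1}\mathcal{C}$.

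For the reverse direction, assume rigidity of $C$ and the filtration. The rigidity extends additively to $\mathcal{T}(\mathcal{C}, \Sigma^i \mathcal{C}) = 0$ for $1 \leq i \leq d-1$, giving the $\mathcal{C} \subseteq \{X : \mathcal{T}(\mathcal{C}, \Sigma^i X) = 0\}$ inclusion. For the reverse containment, take $X$ with vanishing $\mathcal{T}(\mathcal{C}, \Sigma^i X) = 0$ for $1 \leq i \leq d-1$ and use the filtration to present $X$ via a sequence of triangles whose cones lie in $\mathcal{C}, \Sigma\mathcal{C}, \dots, \Sigma^{d-1}\mathcal{C}$. Peeling off the top-shift layer in a triangle $Z \to X \to \Sigma^{d-1}C_{d-1} \to \Sigma Z$, the connecting morphism lies in $\mathcal{T}(\Sigma^{d-1}C_{d-1}, \Sigma Z)$, which vanishes by a combination of the hypothesized Hom-vanishing on $X$ and the rigidity between lower layers. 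This forces the triangle to split, and iterating drives all layers into $\mathcal{C}$, giving $X \in \mathcal{C}$ by the Krull--Schmidt hypothesis (CY1). The dual set equality is argued symmetrically using $\mathcal{T}(-, \Sigma^i \mathcal{C})$.

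The main obstacle I expect is the careful bookkeeping of degree shifts in both directions. In the forward direction one must track how the vanishing range of Hom-spaces degrades under iterated precovers, and in the reverse direction one must verify at each peeling step that the relevant connecting morphism sits in a Hom-space killed by either the hypothesis on $X$ or the pairwise rigidity of the layers. Once this bookkeeping is in place, both implications follow by standard diagram chases.
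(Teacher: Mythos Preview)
The paper does not actually prove this lemma: it is stated as a well-known fact with a pointer to \cite[proof of Theorem 5.8(c)]{IYa}, and no argument is given. So there is nothing to compare your proof against in the paper itself. That said, let me comment on your sketch.

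Your forward direction is the standard precover-tower argument and is essentially correct; the only imprecision is the phrase ``forces $X_{d-1}\in\Sigma^{d-1}\mathcal{C}$ (after a shift)'' --- in fact the vanishing $\mathcal{T}(\mathcal{C},\Sigma^i X_{d-1})=0$ for $1\le i\le d-1$ combined with the cluster-tilting definition gives $X_{d-1}\in\mathcal{C}$ directly, and then reassembling the triangles yields $X\in\mathcal{C}*\Sigma\mathcal{C}*\cdots*\Sigma^{d-1}\mathcal{C}$ as you say.

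Your reverse direction has a genuine gap. You take $X$ with $\mathcal{T}(\mathcal{C},\Sigma^i X)=0$ for $1\le i\le d-1$, peel off the top layer via $Z\to X\to \Sigma^{d-1}C_{d-1}\to\Sigma Z$ with $Z\in\mathcal{C}*\cdots*\Sigma^{d-2}\mathcal{C}$, and assert that the connecting morphism in $\mathcal{T}(\Sigma^{d-1}C_{d-1},\Sigma Z)$ vanishes ``by a combination of the hypothesised Hom-vanishing on $X$ and the rigidity between lower layers''. But $\mathcal{T}(\Sigma^{d-1}C_{d-1},\Sigma Z)=\mathcal{T}(C_{d-1},\Sigma^{2-d}Z)$, and since $\Sigma^{2-d}Z\in\Sigma^{2-d}\mathcal{C}*\cdots*\mathcal{C}$, the relevant Hom-spaces are $\mathcal{T}(\mathcal{C},\Sigma^j\mathcal{C})$ for $2-d\le j\le 0$, none of which are killed by rigidity, and the hypothesis on $X$ does not enter this Hom-space at all. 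So the splitting does not follow as claimed. The fix is to match the peeling direction to the hypothesis: for the equality $\mathcal{C}=\{X:\mathcal{T}(X,\Sigma^i\mathcal{C})=0\}$ one peels from the top and it is the map $X\to\Sigma^{d-1}C_{d-1}$ itself (not the connecting morphism) that vanishes, directly by hypothesis; then $X$ is a summand of $Z$, and one uses that $\mathcal{C}*\cdots*\Sigma^{d-2}\mathcal{C}$ is closed under summands (via rigidity and Krull--Schmidt, cf.\ \cite[Proposition 2.1]{IY}) to iterate. The equality $\mathcal{C}=\{X:\mathcal{T}(\mathcal{C},\Sigma^i X)=0\}$ is then the genuinely dual statement, obtained by peeling the bottom layer $C_0\to X\to W$ and showing the map $C_0\to X$ vanishes when one first reduces, by an inductive shift argument, to a shorter filtration.
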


\section{Applications of our Theorems}\label{section_coro}
In this section, we use our theorems to reprove some important properties of the triangulated category $\mathcal{T}/\mathcal{T}^{fd}$ and of its object $M$.
\begin{remark}\label{remark_section5}
Corollaries \ref{coro_CY}, \ref{coro_n-1ct} and \ref{coro_part3} correspond to the three parts of \cite[Theorem 2.2]{GL}, which is a higher version of \cite[Theorem 2.1]{AC}. Note that both Amiot and Guo work in the more specific setup presented in Remark \ref{remark_AG_setup}.
\end{remark}

Theorem \ref{thm_directstab} has the following consequence, corresponding to \cite[Theorem 2.2(1)]{GL}.

\begin{corollary}\label{coro_CY}
The category $\mathcal{T}/\mathcal{T}^{fd}$ is Hom-finite and $(n-1)$-Calabi-Yau.
\end{corollary}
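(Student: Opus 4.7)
The plan is to deduce both claims directly from Theorem~\ref{thm_directstab} combined with the relative Calabi-Yau duality (CY2). Hom-finiteness is immediate: for every $X,Y\in\mathcal{T}$ Theorem~\ref{thm_directstab}(a) gives
\[
\mathcal{T}/\mathcal{T}^{fd}(X,Y)\cong \mathcal{T}(X^{\leq -p},Y)
\]
for $p\gg 0$, and the right-hand side is finite-dimensional by (CY1).

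For the $(n-1)$-Calabi-Yau property, I will start from Theorem~\ref{thm_directstab}(b),
\[
\mathcal{T}/\mathcal{T}^{fd}(X,Y)\cong\varprojlim_q\varinjlim_p\mathcal{T}(\Sigma^{-1}X^{\geq -p+1},Y^{\leq -q}),
\]
and massage the right-hand side into $D\,\mathcal{T}/\mathcal{T}^{fd}(Y,\Sigma^{n-1}X)$. Since $X^{\geq -p+1}\in\mathcal{T}^{\geq -p+1}\subseteq\mathcal{T}^{fd}$ by (CY3) and $\mathcal{T}^{fd}$ is triangulated, $\Sigma^{-1}X^{\geq -p+1}$ lies in $\mathcal{T}^{fd}$, so (CY2) provides a bifunctorial isomorphism
\[
\mathcal{T}(\Sigma^{-1}X^{\geq -p+1},Y^{\leq -q})\cong D\,\mathcal{T}(Y^{\leq -q},\Sigma^{n-1}X^{\geq -p+1}).
\]

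Next I will swap $X^{\geq -p+1}$ for $X$ inside the second slot. Applying $\mathcal{T}(Y^{\leq -q},\Sigma^{n-1}(-))$ to the truncation triangle of $X$ at level $-p$, this reduces to showing that $\mathcal{T}(Y^{\leq -q},\Sigma^{j}X^{\leq -p})=0$ for $j\in\{n-1,n\}$ and $p\gg 0$ (depending on $q$). This is a torsion-pair argument modeled on Lemma~\ref{lemma_pbig_Y_thick}: by Remark~\ref{remark_cot_structure}, $Y^{\leq -q}\in\mathcal{T}_{\geq l}$ for some integer $l$, while $\Sigma^{j}X^{\leq -p}\in\mathcal{T}_{\leq -p+j}$, so the torsion pair $(\mathcal{T}_{\geq l},\mathcal{T}_{\leq l-1})$ forces the vanishing once $-p+j\leq l-1$.

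Combining the previous two displays and using that $D$ converts colimits into limits, the inner direct system stabilises to $\varinjlim_p\mathcal{T}(\Sigma^{-1}X^{\geq -p+1},Y^{\leq -q})\cong D\,\mathcal{T}(Y^{\leq -q},\Sigma^{n-1}X)$, and then
\[
\varprojlim_q D\,\mathcal{T}(Y^{\leq -q},\Sigma^{n-1}X)\cong D\varinjlim_q\mathcal{T}(Y^{\leq -q},\Sigma^{n-1}X)\cong D\,\mathcal{T}/\mathcal{T}^{fd}(Y,\Sigma^{n-1}X)
\]
by Theorem~\ref{thm_directstab}(a) applied to the pair $(Y,\Sigma^{n-1}X)$. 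Dualising once more, using the Hom-finiteness established above, yields the desired bifunctorial isomorphism $D\,\mathcal{T}/\mathcal{T}^{fd}(X,Y)\cong\mathcal{T}/\mathcal{T}^{fd}(Y,\Sigma^{n-1}X)$. The main obstacle is bookkeeping: ensuring that the ``$p\gg 0$'' stabilisations occur uniformly enough in $q$ to collapse the nested $\varprojlim_q\varinjlim_p$, and that the interchange of $D$ with the limit operations is handled correctly. Both are controlled by the boundedness of the co-$t$-structure in Remark~\ref{remark_cot_structure}, and bifunctoriality persists throughout because (CY2), $D$, and the canonical truncations used in Theorem~\ref{thm_directstab} are all functorial in both variables.
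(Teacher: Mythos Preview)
Your proposal is correct and follows essentially the same route as the paper: Hom-finiteness via Theorem~\ref{thm_directstab}(a), then the $(n-1)$-Calabi-Yau isomorphism by applying (CY2) to $\Sigma^{-1}X^{\geq -p+1}\in\mathcal{T}^{fd}$, replacing $X^{\geq -p+1}$ by $X$ via the truncation triangle and the co-$t$-structure vanishing of Lemma~\ref{lemma_pbig_Y_thick}, and finishing with Theorem~\ref{thm_directstab}(a) applied to $(Y,\Sigma^{n-1}X)$. The only cosmetic difference is that the paper works directly with a fixed $q\gg 0$ (using the intermediate isomorphism established inside the proof of Theorem~\ref{thm_directstab}(b)) and so never needs the interchange $\varprojlim_q D(-)\cong D\varinjlim_q(-)$, whereas you start from the $\varprojlim_q\varinjlim_p$ formulation of Theorem~\ref{thm_directstab}(b) and invoke that interchange explicitly; both are valid.
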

\begin{proof}
Let $X$ and $Y$ be objects in $\mathcal{T}$. By Theorem \ref{thm_directstab}, for $p\gg0$ we have that
\begin{align*}
  \mathcal{T}/\mathcal{T}^{fd}(X, Y) \cong  \mathcal{T}(X^{\leq -p}, Y)
\end{align*}
and $\mathcal{T}$ is Hom-finite by assumption, see (CY1). Hence $\mathcal{T}/\mathcal{T}^{fd}$ is Hom-finite.

Now let $p$ and $q$ be integers. Applying the functor $D\mathcal{T}(Y^{\leq -q},-)$ to a shift of the triangle
\begin{align*}
    \xymatrix@C=3em{
    X^{\leq -p}\ar[r]& X\ar[r]&X^{\geq -p+1}\ar[r]& \Sigma X^{\leq -p},
    }
\end{align*}
we obtain the exact sequence
\begin{align}\label{diagram_prop2}
    \xymatrix@C=0.8em{
    D\mathcal{T}(Y^{\leq -q},\Sigma^n X^{\leq -p})\ar[r]&D\mathcal{T}(Y^{\leq -q},\Sigma^{n-1}X^{\geq -p+1})\ar[r]&D\mathcal{T}(Y^{\leq -q},\Sigma^{n-1}X)\ar[r]&D\mathcal{T}(Y^{\leq -q}, \Sigma^{n-1}X^{\leq -p}).
    }
\end{align}
Taking the direct limit of (\ref{diagram_prop2}) with respect to $p$, we get the exact sequence
\begin{align}\label{diagram_prop3}
    \xymatrix@C=2em{
    0\ar[r]&\underset{p}{\varinjlim}\,D\mathcal{T}(Y^{\leq -q},\Sigma^{n-1}X^{\geq -p+1})\ar[r]^-{\sim}&D\mathcal{T}(Y^{\leq -q},\Sigma^{n-1}X)\ar[r]&0,
    }
\end{align}
where
\begin{align*}
    \underset{p}{\varinjlim}\,D\mathcal{T}(Y^{\leq -q},\Sigma^n X^{\leq -p})=0 \text{ and } \underset{p}{\varinjlim}\,D\mathcal{T}(Y^{\leq -q},\Sigma^{n-1} X^{\leq -p})=0
\end{align*}
by Lemma \ref{lemma_pbig_Y_thick}(b) and because $q$ is fixed. 
For $q\gg0$, we then have
\begin{align}\label{diagram_prop5}
    \underset{p}{\varinjlim}\,D\mathcal{T}(Y^{\leq -q},\Sigma^{n-1}X^{\geq -p+1})\cong
    D\mathcal{T}(Y^{\leq -q},\Sigma^{n-1}X)\cong
    D\mathcal{T}/\mathcal{T}^{fd}(Y,\Sigma^{n-1}X),
\end{align}
where the first isomorphism is obtained by (\ref{diagram_prop3}) and the second one follows by Theorem \ref{thm_directstab}(a), as $q\gg0$.
Then, for $q\gg0$, we have
\begin{align*}
    \mathcal{T}/\mathcal{T}^{fd}(X,Y)\cong
    \underset{p}{\varinjlim}\,\mathcal{T}(\Sigma^{-1}X^{\geq -p+1},Y^{\leq -q})\cong
    \underset{p}{\varinjlim}\,D\mathcal{T}(Y^{\leq -q},\Sigma^{n-1}X^{\geq -p+1})\cong
    D\mathcal{T}/\mathcal{T}^{fd}(Y, \Sigma^{n-1}X).
\end{align*}
In the above, the first isomorphism follows by combining Lemma \ref{lemma_iso_quotient_cat} and  Theorem \ref{thm_directstab}(a), the second one by applying (CY2) from Definition \ref{defn_nCYtriple} to $\Sigma^{-1}X^{\geq -p+1}\in\mathcal{T}^{fd}$ and $Y^{\leq -q}$ and the third one follows by (\ref{diagram_prop5}). 
Hence $\mathcal{T}/\mathcal{T}^{fd}$ is $(n-1)$-Calabi-Yau.
\end{proof}

Before presenting the next result, we recall the definition of left and right minimal morphisms, which will be used in the proof.

\begin{defn}
A morphism $\alpha:A\rightarrow B$ in a category $\mathcal{A}$ is left minimal if each morphism $\eta: B\rightarrow B$ which satisfies $\eta\circ \alpha=\alpha$ is an isomorphism. Dually, $\alpha$ is right minimal if each morphism $\varphi:A\rightarrow A$ which satifies $\alpha\circ\varphi=\alpha$ is an isomorphism.
\end{defn}

As a consequence of Theorems \ref{thm_gapinX} and \ref{thm_gapinXdual}, we obtain the following, which corresponds to \cite[Theorem 2.2(2)]{GL}.

\begin{corollary}\label{coro_n-1ct}
The object $M$ is $(n-1)$-cluster tilting in $\mathcal{T}/\mathcal{T}^{fd}$.
\end{corollary}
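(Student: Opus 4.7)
The plan is to invoke Lemma \ref{lemma_dct_eq_defn} with $d=n-1$ and $\mathcal{C}=\mathcal{M}=\text{add}(M)$. This reduces the corollary to verifying (i) $\mathcal{T}/\mathcal{T}^{fd}(M,\Sigma^i M)=0$ for $1\le i\le n-2$, and (ii) the generation identity
$$\mathcal{T}/\mathcal{T}^{fd}=\mathcal{M}*\Sigma\mathcal{M}*\cdots *\Sigma^{n-2}\mathcal{M}=\mathcal{W}_0$$
in the notation of Lemma \ref{lemma_W_torsionpair}. Part (i) is immediate from Corollary \ref{coro_MandSigma_zero}, so the whole content lies in (ii).

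For (ii), take any $X\in\mathcal{T}$. By Lemma \ref{lemma_iso_quotient_cat} I may replace $X$ by $X^{\le 0}$ and assume $X\in\mathcal{T}^{\le 0}$. Since $\mathcal{W}_0$ is functorially finite in $\mathcal{T}$ by Lemma \ref{lemma_W_torsionpair}, the torsion pair $({}^{\perp_\mathcal{T}}\mathcal{W}_0,\mathcal{W}_0)$ produces a triangle
$$G\to X\to W\to \Sigma G$$
with $G\in {}^{\perp_\mathcal{T}}\mathcal{W}_0$ and $W\in\mathcal{W}_0$; using Hom-finiteness and Krull-Schmidt from (CY1), the morphism $X\to W$ can be chosen as a minimal $\mathcal{W}_0$-preenvelope. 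The entire statement then reduces to proving $G\in\mathcal{T}^{fd}$, for then the triangle shows $X\cong W$ in $\mathcal{T}/\mathcal{T}^{fd}$ with $W\in\mathcal{W}_0$.

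To prove $G\in\mathcal{T}^{fd}$, note that $G\in {}^{\perp_\mathcal{T}}\mathcal{W}_0$ gives $\mathcal{T}(G,\Sigma^i M)=0$ for $0\le i\le n-2$, which is precisely the $n-1$ consecutive vanishings needed to apply Theorem \ref{thm_gapinXdual} with $a=0$. That theorem splits the corresponding truncation triangle, yielding
$$G\cong \widetilde{G}^{\le 0}\oplus \widetilde{G}^{\ge 1},$$
and by the dual condition (CY3$^{\text{op}}$) recorded in the excerpt, $\widetilde{G}^{\le 0}\in\widetilde{\mathcal{T}}^{\le 0}\subseteq\mathcal{T}^{fd}$. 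What remains is to dispose of the residual summand $\widetilde{G}^{\ge 1}$.

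The main obstacle is showing $\widetilde{G}^{\ge 1}=0$ (or at least that it lies in $\mathcal{T}^{fd}$). The strategy is to leverage minimality of the chosen approximation $X\to W$: since $\widetilde{G}^{\ge 1}$ is a direct summand of $G\in {}^{\perp_\mathcal{T}}\mathcal{W}_0$ we have $\mathcal{T}(\widetilde{G}^{\ge 1},W)=0$, and a non-zero $\widetilde{G}^{\ge 1}$ would force $\widetilde{G}^{\ge 1}$ to split off $X$ as a direct summand on which the $\mathcal{W}_0$-preenvelope is zero, contradicting minimality. Alternatively, Theorem \ref{thm_gapinX} may be applied to $\widetilde{G}^{\ge 1}$ via the Serre-type duality from (CY2) to collapse it onto a piece of $\mathcal{T}^{fd}$. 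Once $G\in\mathcal{T}^{fd}$ is established, the triangle yields the desired isomorphism $X\cong W\in\mathcal{W}_0$ in $\mathcal{T}/\mathcal{T}^{fd}$, completing the proof of (ii) and hence of the corollary.
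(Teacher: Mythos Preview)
Your overall strategy is sound and, once patched, is actually more direct than the paper's. However, the step disposing of $\widetilde{G}^{\geq 1}$ contains a genuine gap. You cite the vanishing $\mathcal{T}(\widetilde{G}^{\geq 1},W)=0$ and then assert that a non-zero $\widetilde{G}^{\geq 1}$ would split off $X$ as a direct summand. Neither point does the job: there is no reason $\widetilde{G}^{\geq 1}$ should be a summand of $X$, and left minimality of the preenvelope $X\to W$ constrains summands of $W$, not of $X$. The vague alternative via Theorem~\ref{thm_gapinX} and (CY2) is not a proof either.

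The fix is to use the vanishing $\mathcal{T}(\widetilde{G}^{\geq 1},X)=0$ together with \emph{right} minimality of $u:G\to X$. Since you already replaced $X$ by $X^{\leq 0}$, Remark~\ref{remark_cot_structure} gives $X\in\mathcal{T}^{\leq 0}=\mathcal{T}_{\leq 0}$, so $X$ is an iterated extension of objects in $\Sigma^{\geq 0}\mathcal{M}$. On the other hand $\widetilde{G}^{\geq 1}\in\widetilde{\mathcal{T}}^{\geq 1}={}^{\perp_\mathcal{T}}(\Sigma^{\geq 0}\mathcal{M})$, whence $\mathcal{T}(\widetilde{G}^{\geq 1},X)=0$. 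As $X\to W$ is left minimal, \cite[Lemma~2.5]{KH} makes $u:G\to X$ right minimal; since $u$ vanishes on the summand $\widetilde{G}^{\geq 1}$, that summand is zero. Then $G=\widetilde{G}^{\leq 0}\in\mathcal{T}^{fd}$ and your argument concludes.

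For comparison, the paper argues in two stages. First it uses the torsion pair $(\mathcal{W}_p,\mathcal{W}_p^{\perp_\mathcal{T}})$, Theorem~\ref{thm_gapinX}, and Lemma~\ref{lemma_pbig_Y_thick}(b) to place an arbitrary $X$ in $\mathcal{W}_p$ (in the quotient) for $p\gg 0$. Then it uses $({}^{\perp_\mathcal{T}}\mathcal{W}_a,\mathcal{W}_a)$ and Theorem~\ref{thm_gapinXdual} to prove $\mathcal{W}_{a+1}\subseteq_{\mathcal{T}/\mathcal{T}^{fd}}\mathcal{W}_a$ for all $a\geq 0$, descending to $\mathcal{W}_0$. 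Your route collapses this to a single application of $({}^{\perp_\mathcal{T}}\mathcal{W}_0,\mathcal{W}_0)$ and Theorem~\ref{thm_gapinXdual}; the economy comes precisely from the preliminary reduction to $X\in\mathcal{T}^{\leq 0}$, but that reduction only pays off once you exploit $\mathcal{T}_{\leq 0}=\mathcal{T}^{\leq 0}$ to obtain $\mathcal{T}(\widetilde{G}^{\geq 1},X)=0$, which is the step you omitted.
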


\begin{proof}
By Corollary \ref{coro_MandSigma_zero}, we have that $\mathcal{T}/\mathcal{T}^{fd}(M,\Sigma^j M)=0$ for $j=1,\dots, n-2$. Hence, by Lemma \ref{lemma_dct_eq_defn}, in order to prove that $M$ is $(n-1)$-cluster tilting in $\mathcal{T}/\mathcal{T}^{fd}$, it is enough to prove that
\begin{align*}
    \mathcal{W}_0=\mathcal{M}*\Sigma\mathcal{M}*\cdots*\Sigma^{n-2}\mathcal{M}=\mathcal{T}/\mathcal{T}^{fd}.
\end{align*}

Let $X$ be an object in $\mathcal{T}$ and $p$ be an integer. By Lemma \ref{lemma_W_torsionpair}, we have that $(\mathcal{W}_p,\mathcal{W}_p^{\perp_\mathcal{T}})$ is a torsion pair and so $\mathcal{T}=\mathcal{W}_p*\mathcal{W}_p^{\perp_\mathcal{T}}$. Hence there exists a triangle in $\mathcal{T}$ of the form
\begin{align*}
    W\xrightarrow{f} X\xrightarrow{g} V\rightarrow \Sigma W,
\end{align*}
where $W\in\mathcal{W}_p$, $V\in\mathcal{W}_p^{\perp_{\mathcal{T}}}$ and without loss of generality we may assume that $f$ is a right minimal morphism. Then, by \cite[Lemma 2.5]{KH}, we have that $g$ is a left minimal morphism.
Since $V\in \mathcal{W}_p^{\perp_{\mathcal{T}}}$, we have that 
\begin{align*}
    \mathcal{T}(M,\Sigma^j V)=0
\end{align*}
for $-p-n+2\leq j\leq -p$. Then, by Theorem \ref{thm_gapinX} we have that
\begin{align*}
    V\cong V^{\leq -p-n+1}\oplus V^{\geq -p-n+2}.
\end{align*}
By Lemma \ref{lemma_pbig_Y_thick}(b), for $p\gg0$ we have that $\mathcal{T}(X,V^{\leq -p-n+1})=0$ and as $g$ is left minimal, it follows that $V^{\leq -p-n+1}=0$ and
\begin{align*}
    V\cong_{\mathcal{T}/\mathcal{T}^{fd}} 0 \text{ and } X\cong_{\mathcal{T}/\mathcal{T}^{fd}} W\in\mathcal{W}_p.
\end{align*}
Hence $X\in\mathcal{W}_p$ for some $p\gg0$.

We now show that if $a\geq 0$ is an integer, then $\mathcal{W}_{a+1}\subseteq_{\mathcal{T}/\mathcal{T}^{fd}} \mathcal{W}_{a}$. Let $Y\in\mathcal{W}_{a+1}$. Since $({}^{\perp_{\mathcal{T}}}\mathcal{W}_a,\mathcal{W}_a)$ is a torsion pair by Lemma \ref{lemma_W_torsionpair}, there is a triangle of the form
\begin{align*}
    U\xrightarrow{u} Y\xrightarrow{y} W'\rightarrow \Sigma U,
\end{align*}
where $U\in {}^{\perp_\mathcal{T}}\mathcal{W}_a$, $W'\in\mathcal{W}_a$ and without loss of generality we may assume that $y$ is left minimal, so that $u$ is right minimal by \cite[Lemma 2.5]{KH}. Note that since $U\in {}^{\perp_\mathcal{T}}\mathcal{W}_a$, we have that
\begin{align*}
    \mathcal{T}(U,\Sigma^j M)=0
\end{align*}
for $a\leq j\leq a+n-2$. Then, by Theorem \ref{thm_gapinXdual}, we have that
\begin{align*}
    U\cong \widetilde{U}^{\leq -a}\oplus \widetilde{U}^{\geq -a+1}.
\end{align*}
Since $\widetilde{U}^{\geq -a+1}\in\widetilde{\mathcal{T}}^{\geq -a+1}$ and $Y\in\mathcal{W}_{a+1}$, we have that
\begin{align*}
    \mathcal{T}(\widetilde{U}^{\geq -a+1},\Sigma^{\geq a} M)=0 \text{ and } \mathcal{T}(\widetilde{U}^{\geq -a+1}, Y)=0.
\end{align*}
As $u$ is right minimal, it follows that $\widetilde{U}^{\geq -a+1}=0$ and $U\cong_{\mathcal{T}/\mathcal{T}^{fd}} 0$. Hence
\begin{align*}
    Y\cong_{\mathcal{T}/\mathcal{T}^fd} W'\in\mathcal{W}_{a}
\end{align*}
and so $\mathcal{W}_{a+1}\subseteq_{\mathcal{T}/\mathcal{T}^{fd}}\mathcal{W}_a$ as we wanted to show.

Since the above is true for arbitrary $a\geq 0$, we have
\begin{align*}
    \cdots\subseteq_{\mathcal{T}/\mathcal{T}^{fd}}\mathcal{W}_p\subseteq_{\mathcal{T}/\mathcal{T}^{fd}}\cdots \subseteq_{\mathcal{T}/\mathcal{T}^{fd}}\mathcal{W}_2\subseteq_{\mathcal{T}/\mathcal{T}^{fd}}\mathcal{W}_1\subseteq_{\mathcal{T}/\mathcal{T}^{fd}}\mathcal{W}_0.
\end{align*}
So we conclude that
\begin{align*}
    \mathcal{W}_0=\mathcal{M}*\Sigma\mathcal{M}*\cdots*\Sigma^{n-2}\mathcal{M}=\mathcal{T}/\mathcal{T}^{fd}
\end{align*}
and $M$ is $(n-1)$-cluster tilting in $\mathcal{T}/\mathcal{T}^{fd}$.
\end{proof}

As a consequence to Corollary \ref{coro_valueforp}, we obtain the following, which corresponds to \cite[Theorem 2.2(3)]{GL}.
\begin{corollary}\label{coro_part3}
We have that $\mathcal{T}/\mathcal{T}^{fd}(M,M)\cong\mathcal{T}(M,M)$.
\end{corollary}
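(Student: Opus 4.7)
The plan is to specialize Corollary \ref{coro_valueforp} to the case $X = M$ and $j = 0$, and then apply Lemma \ref{lemma_M_iso_M_leq} to replace the truncation $M^{\leq n-2}$ by $M$ itself.

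More precisely, first I would note that taking $X = M$ and $j = 0$ in Corollary \ref{coro_valueforp} gives
\begin{align*}
    \mathcal{T}/\mathcal{T}^{fd}(M, M) \cong \mathcal{T}(M^{\leq n-2}, M).
\end{align*}
Since $n \geq 3$, we have $n - 2 \geq 1 \geq 0$, so Lemma \ref{lemma_M_iso_M_leq} applies with $i = n-2$ and yields an isomorphism $M \cong_\mathcal{T} M^{\leq n-2}$. Substituting this into the right-hand side gives
\begin{align*}
    \mathcal{T}/\mathcal{T}^{fd}(M, M) \cong \mathcal{T}(M^{\leq n-2}, M) \cong \mathcal{T}(M, M),
\end{align*}
which is exactly the claim.

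There is essentially no obstacle here: the proof is a direct two-step combination of the two results already established, and the only thing to check is the mild numerical inequality $n - 2 \geq 0$ needed to invoke Lemma \ref{lemma_M_iso_M_leq}. The real content sits in Corollary \ref{coro_valueforp} (which in turn rests on Theorem \ref{thm_directstab}(a) and the relative Calabi-Yau property), so this corollary is properly a clean specialization rather than an independent argument.
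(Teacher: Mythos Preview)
Your proof is correct and follows exactly the same approach as the paper: apply Corollary \ref{coro_valueforp} with $X=M$ and $j=0$, then use Lemma \ref{lemma_M_iso_M_leq} (valid since $n-2\geq 0$) to identify $M^{\leq n-2}$ with $M$.
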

\begin{proof}
We have that
\begin{align*}
    \mathcal{T}/\mathcal{T}^{fd}(M,M)\cong\mathcal{T}(M^{\leq n-2},M)\cong\mathcal{T}(M,M),
\end{align*}
where the first isomorphism holds by Corollary \ref{coro_valueforp} with $X=M$ and $j=0$ and the second one, since $n-2\geq 0$, holds by Lemma \ref{lemma_M_iso_M_leq}.
\end{proof}

\begin{remark}
At first glance, the statement of Corollary \ref{coro_part3} is different from \cite[Theorem 2.2(3)]{GL}. However, in Guo's setup, see Remarks \ref{remark_AG_setup} and \ref{remark_section5}, we have that
\begin{align*}
    \Hom_{\text{per}\, A} (A,-)\cong H^0(-).
\end{align*}
Hence \cite[Theorem 2.2(3)]{GL} is equivalent to
\begin{align*}
    \Hom_{\text{per}\, A / \mathcal{D}^{b} A} (A,A)\cong \Hom_{\text{per}\, A} (A,A),
\end{align*}
which is a special case of Corollary \ref{coro_part3}.
\end{remark}


\begin{thebibliography}{99}

\bibitem{AC} C. Amiot, {\em Cluster categories for algebras of global dimension $2$ and quivers with potential}, Ann. Inst. Fourier \textbf{59} (2009), 2525-2590.

\bibitem{AZ} M. Artin and J. J. Zhang, {\em Noncommutative projective schemes}, Adv. Math. \textbf{109} (1994), 228-287.


\bibitem{B} A. A. Beilinson and J. Bernstein and P. Deligne, {\em Faisceaux pervers}, Ast{é}risque \textbf{100} (1982) (Vol. 1 of the proceedings of the conference ``Analysis and topology on singular space'', Luminy, 1981).

\bibitem{Bo} M. V. Bondarko, {\em Weight structures vs. $t$-structures; weight filtrations, spectral sequences, and complexes (for motives and in general)}, J. K-Theory \textbf{6} (2010), 387-504.

\bibitem{BRMRT}A. B. Buan, R. J. Marsh, M. Reineke, I. Reiten, and G. Todorov, {\em Tilting theory and cluster combinatorics}, Adv.
Math. \textbf{204} (2006), 572-618.

\bibitem{FZ} S. Fomin and A. Zelevinsky, {\em Cluster algebras I: Foundations}, J. Amer. Math. Soc. \textbf{15} (2002), 497-529.

\bibitem{FJ} A. Frankild and P. J{\o}rgensen, {\em Homological identities for differential graded algebras}, J. Algebra \textbf{265} (2003), 114-135.

\bibitem{Gin} V. Ginzburg, Calabi-Yau algebras, preprint (2006). \url{math.AG/0612139v3}

\bibitem{GL} L. Guo, {\em Cluster tilting objects in generalized higher cluster categories}, J. Pure Appl. Algebra \textbf{215} (2011), 2055-2071.

\bibitem{IYa} O. Iyama and D. Yang, {\em Silting reduction and Calabi-Yau reduction of triangulated categories}, Trans. Amer. Math. Soc. \textbf{370} (2018), 7861-7898.

\bibitem{IY} O.\ Iyama and Y. Yoshino, {\it Mutation in triangulated categories and rigid Cohen-Macaulay modules},  Invent. Math. \textbf{172} (2008), 117-168.

\bibitem{JPcot} P. J{\o}rgensen, {\em Co-$t$-structures: the first decade}, Surveys in Representation Theory of Algebras \textbf{716} (2018), 25-36.

\bibitem{KH} H.\ Krause, {\em Auslander-Reiten theory via Brown representability}, K-Theory \textbf{20} (2011), 331-344.

\bibitem{MJ} J.\ Miyachi, {\em Derived categories with applications to representations of algebras}, Chiba University (2000). Available at \url{http://www.u-gakugei.ac.jp/~miyachi/papers/ChibaSemi.pdf}

\bibitem{P} D. Pauksztello, {\em Compact corigid objects in triangulated categories and co-$t$-structures}, Cent. Eur. J. Math. \textbf{6} (2008), 25-42.

\bibitem{Rot} J. J. Rotman, ``An introduction to homological algebra'', Springer Science and Business Media, 2008, second edition.

\bibitem{SZ} M. Saor\'{i}n and A. Zvonareva, Lifting of recollements and gluing of partial silting sets, preprint (2018). \url{math.RT/1809.03243}


\end{thebibliography}
\end{document}